\documentclass[oneside, english]{article}
\usepackage[T1]{fontenc}
\usepackage[latin1]{inputenc}
\usepackage{amssymb}
\usepackage{amsmath}
\usepackage[all]{xy}
\usepackage{amsthm}
\usepackage{setspace}

\makeatletter

  \theoremstyle{definition}
  \newtheorem{defn}{Definition}[section]

  \theoremstyle{remark}

   \theoremstyle{plain}
  \newtheorem{thm}[defn]{Theorem}
  \newtheorem{lem}[defn]{Lemma}

  \newtheorem{cor}[defn]{Corollary}
  
    \newtheorem{prob}[defn]{Problem}

\usepackage{geometry}

\usepackage{babel}
\makeatother
\begin{document}
\title{Simplicity of vacuum modules over affine Lie superalgebras}

\author{Crystal Hoyt\footnote{Address correspondence to Crystal Hoyt, Department of Mathematics, Weizmann Institute,
 Rehovot 76100 Israel; Fax: 972-8-934-6023; Email: crystal.hoyt@weizmann.ac.il}\ \
and Shifra Reif\footnote{Email: shifra.reif@weizmann.ac.il}\\
{\small Department of Mathematics, Weizmann Institute}\\
{\small Rehovot 76100 Israel}}

\maketitle

\begin{abstract}
We prove an explicit condition on the level $k$ for the irreducibility of a vacuum module $V^{k}$ over a (non-twisted)
affine Lie superalgebra, which was conjectured by M. Gorelik and V.G. Kac. An immediate consequence of this work is
the simplicity conditions for the corresponding minimal W-algebras obtained via quantum reduction, in all cases
except when the level $k$ is a non-negative integer.
\end{abstract}

\setcounter{section}{-1}

\section{Introduction}

The (non-twisted) affine Lie superalgebra $\hat{\mathfrak{g}} = \mathfrak{g}[t,t^{-1}]+\mathbb{C}K+\mathbb{C}D$
is obtained from a simple finite dimensional Lie superalgebra $\mathfrak{g}$, with a non-degenerate even invariant bilinear
form $B(\cdot,\cdot)$, and has the following commutation relations:
\begin{equation*}
[at^{m},bt^{n}]=[a,b]t^{m+n}+m\delta_{m,-n}B(a,b)K, \hspace{.5cm} [D,at^{m}]=-mat^{m}, \hspace{.5cm}
[K,\hat{\mathfrak{g}}]=0.
\end{equation*}
Let $2 h^{\vee}_{B}$ be the eigenvalue of the Casimir operator $\sum_{i}a_{i}a^{i}$
in the adjoint representation, where $\{a_{i}\}$ and $\{a^{i}\}$
are dual bases of $\mathfrak{g}$ with respect to $B(\cdot,\cdot)$.

The vacuum module over $\hat{\mathfrak{g}}$ is the induced module
\begin{equation*}
V^{k}=\operatorname{Ind}_{\mathfrak{g}[t]+\mathbb{C}K+\mathbb{C}D}^{\hat{\mathfrak{g}}}\mathbb{C}_{k},
\end{equation*}
where $\mathbb{C}_{k}$ is the $1$-dimensional module with trivial action of $\mathfrak{g}[t]+\mathbb{C}D$, and the action
of $K$ is given by $k\cdot\mbox{Id}$ for some $k \in \mathbb{C}$.  Note that this module does not depend on the choice of simple
roots of $\mathfrak{g}$.

We prove the following theorem, which was conjectured by M. Gorelik and V.G. Kac.
\begin{thm}\label{thm1} Let $\mathfrak{g}$ be an (almost) simple finite dimensional Lie superalgebra
of positive defect.  Then the $\hat{\mathfrak{g}}$-module $V^{k}$ is not irreducible if and only if
\begin{equation*}
\frac{k+h^{\vee}_{B}}{B(\alpha,\alpha)}  \in \mathbb{Q}_{\geq0}
\end{equation*}
for some even root $\alpha$ of $\mathfrak{g}$.
\end{thm}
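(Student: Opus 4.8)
The plan is to reduce the irreducibility of $V^k$ to the (non)existence of singular vectors and then to control these via the Kac--Kazhdan equations and the contravariant (Shapovalov) determinant of $\hat{\mathfrak g}$. Fix a Borel subalgebra of $\mathfrak g$, write $\rho$ for the associated Weyl vector, $\Lambda_0$ for the fundamental weight dual to $K$, and $\delta$ for the imaginary root attached to $D$; extend $B$ to $\hat{\mathfrak h}^*$ in the usual way, so that $B(\Lambda_0,\delta)=1$ and $B(\Lambda_0,\mathfrak h^*)=B(\delta,\mathfrak h^*)=B(\Lambda_0,\Lambda_0)=B(\delta,\delta)=0$. Since $V^k$ is the quotient of the Verma module $M(k\Lambda_0)$ by the submodule generated by the finite lowering operators, $V^k$ is reducible exactly when the Shapovalov form degenerates at some weight $k\Lambda_0-\eta$ with $\eta$ of nonzero $\delta$-grading; equivalently, $M(k\Lambda_0)$ carries a singular vector of such a weight. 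I would therefore restrict attention to the affine positive roots $\beta=\alpha+n\delta$ with $n\geq1$ (together with the imaginary roots $n\delta$), since the contributions with $n=0$ lie in the finite part that is killed in the vacuum quotient.

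For any root $\alpha$ of $\mathfrak g$ and $n\geq1$ one computes, with $\lambda=k\Lambda_0$ and $\hat\rho=h^\vee_B\Lambda_0+\rho$,
$$B(\lambda+\hat\rho,\ \alpha+n\delta)=n(k+h^\vee_B)+B(\rho,\alpha),\qquad B(\alpha+n\delta,\ \alpha+n\delta)=B(\alpha,\alpha).$$
For an even, non-isotropic root $\alpha$ the Kac--Kazhdan condition for a singular vector reads $2\big[n(k+h^\vee_B)+B(\rho,\alpha)\big]=m\,B(\alpha,\alpha)$ for some $m\in\mathbb Z_{>0}$. When $\alpha$ is simple one has $B(\rho,\alpha)=\tfrac12B(\alpha,\alpha)$, and the equation collapses to
$$\frac{k+h^\vee_B}{B(\alpha,\alpha)}=\frac{m-1}{2n}\in\mathbb Q_{\geq0}.$$
As $m,n$ run over the positive integers the right-hand side sweeps out all of $\mathbb Q_{\geq0}$, so each simple even root contributes precisely the locus in the theorem (with the sign of $k+h^\vee_B$ tied to that of $B(\alpha,\alpha)$, which matters since $B$ need not be positive on the even root lattice). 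A general even root is reduced to this case by invariance of $V^k$ and of the set of Kac--Kazhdan factors under the dot-action of the finite even Weyl group $\overline W_{\bar0}$, together with changes of Borel; since the condition depends on $\alpha$ only through $B(\alpha,\alpha)$, it suffices to realise each even-root length by a simple even root. This already yields the ``if'' direction, and shows that the imaginary roots $n\delta$, for which the isotropic condition $n(k+h^\vee_B)=0$ forces the critical level $k=-h^\vee_B$, contribute nothing new (there $0=\tfrac{k+h^\vee_B}{B(\alpha,\alpha)}\in\mathbb Q_{\geq0}$).

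For the ``only if'' direction I must rule out reducibility coming from odd roots. Non-isotropic odd roots $\beta$ are handled by relating them to the even root their reflection generates (in the relevant cases $2\beta$, or $\beta$ together with a parity restriction on $m$), so that their Kac--Kazhdan locus is contained in an even-root locus. The essential difficulty, and the whole point of the positive-defect hypothesis, is the isotropic odd roots $\gamma+n\delta$ with $B(\gamma,\gamma)=0$: their condition is $n(k+h^\vee_B)+B(\rho,\gamma)=0$, a single value of $k$ that a priori need not satisfy any even-root condition. The plan is to show that these factors produce no genuine singular vector of $V^k$ beyond the loci already found. Here I would exploit that $V^k$ is independent of the choice of simple roots: applying odd reflections moves $\gamma$ to a simple isotropic root, where $B(\rho,\gamma)=0$ and the condition degenerates to the critical level, while the passage between Borels is tracked through the relations expressing isotropic odd roots as combinations of even and odd roots. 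One then argues that whenever an isotropic factor vanishes at a non-critical level, an even root meets the clean condition at the same level, so the isotropic contributions either cancel in the vacuum determinant or map to zero under $M(k\Lambda_0)\to V^k$.

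I expect this last step --- the bookkeeping of the isotropic odd roots across all Borels, and the proof that they contribute no new zeros of the vacuum determinant --- to be the main obstacle, since it is exactly what distinguishes positive defect from the defect-zero situation and what was left open in the work of Gorelik and Kac. The remaining ingredients (the Kac--Kazhdan computation, the Weyl-group reduction to simple even roots, and the survival of the even-root singular vectors in the vacuum quotient) are comparatively routine once the isotropic analysis is in place.
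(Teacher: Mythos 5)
The central step of your argument --- that a zero of the Kac--Kazhdan/Shapovalov factor attached to an even real root $\alpha+n\delta$, $n\geq 1$, forces $V^{k}$ to be reducible --- is exactly where the gap lies. Degeneracy of the contravariant form on the Verma module $M(k\Lambda_0)$ does not automatically pass to the quotient $V^{k}$: you yourself discard the $n=0$ factors on the grounds that their singular vectors die in the vacuum quotient, but you give no reason why the $n\geq1$ ones survive. The correct criterion is the vanishing of the vacuum determinant, equivalently the nontriviality of the Jantzen filtration of $V^{k}$ itself, whose relevant exponents are alternating sums of the form $a_{m,\xi}=\sum_{\gamma}\sum_{r}(-1)^{(r+1)p(\gamma)}(\dim\mathfrak{g}_{\gamma})\,k_{\Pi}(m\xi-r\gamma)$, where $k_{\Pi}$ is the coefficient function of the super Weyl denominator. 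For positive defect the denominator has infinitely many terms of both signs (precisely because of the isotropic roots), so these exponents can and a priori do vanish by cancellation even when $(m,\xi)$ satisfies the Kac--Kazhdan equation. Showing that at least one such exponent is nonzero for every rational level is the entire content of the paper: it requires the Kac--Wakimoto denominator identity, a separate choice of simple root system for each algebra and each sign of $k+h^{\vee}$ (Tables I--IV), and a case analysis eliminating all competing roots $\gamma$. Your proposal labels this step ``comparatively routine,'' which it is not.

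Relatedly, you have the division of labour reversed. The ``only if'' direction --- irreducibility of $V^{k}$ when the stated locus is avoided, which for defect at least two means $k\notin\mathbb{Q}$ --- is the part already settled by the vacuum determinant of Gorelik and Kac, including the bookkeeping of isotropic odd roots that you identify as the main obstacle and as ``what was left open.'' What was actually left open, and what this paper supplies, is the ``if'' direction: that reducibility genuinely occurs at every rational level, i.e.\ that the candidate degeneracies are not all cancelled in the vacuum quotient. Your outline, as written, does not engage with that problem.
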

\noindent It was shown in \cite{GK} that Theorem~\ref{thm1} holds for simple Lie superalgebras with defect
zero or one, using the Shapovalov determinant.  In this paper, we prove the theorem for simple Lie superalgebras with
defect greater than or equal to two, completing the proof of the theorem.

Fix $(\cdot,\cdot)$ to be the non-degenerate even invariant bilinear form
on $\mathfrak{g}$ with standard normalization as introduced in \cite{KW1}.
Then $h^{\vee}$ is called the dual Coxeter number of $\mathfrak{g}$ (see \cite{KW1} for the values).
Then $(\alpha,\alpha)\in\mathbb{Q}$ for $\alpha\in\Delta$.  When the defect is greater than or equal to two,
there exist even roots $\alpha$ and $\alpha'$ such the $(\alpha,\alpha)>0$ and $(\alpha',\alpha')<0$.
In this case, Theorem~\ref{thm1} can be reformulated as follows. Let $\mathfrak{g}$ be an (almost) simple Lie superalgebras
with defect greater than or equal to two. A vacuum module $V^{k}$ of
$\hat{\mathfrak{g}}$ is irreducible if and only if $k \in \mathbb{C}\setminus\mathbb{Q}$.

Our proof goes as follows.  Let $V^{k}$ be a vacuum module over $\hat{\mathfrak{g}}$.
By analyzing a character formula given in \cite{GK},
we show that if the level $k$ is a rational number then the Jantzen filtration is non-trivial. For each superalgebra, our
proof is broken up into two cases, namely $k+h^{\vee}>0$ and $k+h^{\vee}<0$.  For each case, we choose a different set of
simple roots for the finite dimensional Lie superalgebra $\mathfrak{g}$. Note that a vacuum module is always
reducible at the critical level $k=-h^{\vee}$.  The fact that $V^{k}$ is simple for $k\not\in \mathbb{Q}$ follows from
the vacuum determinant, and is shown in \cite{GK}.

An application of the main theorem is given in the last section.  We obtain simplicity conditions for the
minimal $W$-algebras $W^{k}\left(\mathfrak{g},f_{\theta}\right)$, where $\mathfrak{g}$ is a simple contragredient
finite-dimensional Lie superalgebra, $f_{\theta}$ is a root vector of the lowest root, which is assumed to be even,
and $k\in\mathbb{C}\setminus\mathbb{Z}_{\geq 0}$.  This is achieved via quantum reduction, which was introduced for
Lie algebras in \cite{FF1}, \cite{FF2} and extended to Lie superalgebras in \cite{KRW}.

\textbf{Acknowledgement.}  We would like to thank Maria Gorelik and Professor Anthony Joseph for
reading drafts of the paper and for helpful discussions.

\section{Preliminaries}
\subsection{Affine Lie superalgebras}
Let $\mathfrak{g}$ be a simple finite dimensional Lie superalgebra, and let $\hat{\mathfrak{g}}$ be the corresponding
affine Lie superalgebra (\cite{K1}, \cite{K2}).  Let $\Delta$ (resp. $\hat{\Delta}$) denote the roots of $\mathfrak{g}$
(resp. $\hat{\mathfrak{g}}$).  We use the standard notations for roots.  Corresponding to a set of simple roots
$\Pi=\{\alpha_1,\ldots,\alpha_n\}$ of $\mathfrak{g}$, we have the triangular decomposition
$\mathfrak{g}=\mathfrak{n}^{-}\oplus\mathfrak{h}\oplus\mathfrak{n}^{+}$.  Let
$\hat{\Pi}=\{\alpha_0:=\delta-\theta\}\cup\Pi$ be the
simple roots of $\hat{\mathfrak{g}}$, where $\theta$ is the highest root of $\mathfrak{g}$, and let
$\hat{\mathfrak{g}}=\hat{\mathfrak{n}}^{-}\oplus\hat{\mathfrak{h}}\oplus\hat{\mathfrak{n}}^{+}$ be the corresponding
triangular decomposition, where $\hat{\mathfrak{h}}=\mathfrak{h}\oplus\mathbb{C}K\oplus\mathbb{C}D$.
The root lattice of $\mathfrak{g}$ (resp. $\hat{\mathfrak{g}}$) is defined to be
$Q=\sum_{i=1}^{n} \mathbb{Z} \alpha_i$ (resp. $\hat{Q}=\sum_{i=0}^{n} \mathbb{Z} \alpha_i$).  Let $Q^{+}=\sum_{i=1}^{n}
\mathbb{N} \alpha_i$ and  $\hat{Q}^{+}=\sum_{i=0}^{n} \mathbb{N} \alpha_i$.  Define a partial ordering
on $\mathfrak{h}^{*}$ by $\alpha\geq\beta$ if $\alpha-\beta \in Q^{+}$.

Let $\kappa(\cdot,\cdot)$ denote the Killing form of $\mathfrak{g}$.  If $\kappa$ is non-zero, set
$\Delta^{\#}=\{\alpha\in\Delta\mid\kappa(\alpha,\alpha)>0\}$.  If $\kappa=0$, then $\mathfrak{g}$ is of type $A(n|n)$,
$D(n+1|n)$ or $D(1,2,\alpha)$.  In this case, $\Delta_{0}$ is a union of two orthogonal subsystems:
$\Delta_{0}=A_{n}\cup A_{n},\ D_{n+1}\cup C_{n},\ D_{2}\cup C_{1}$, respectively, and we let $\Delta^{\#}$ be the first
subset. Let
$W^{\#}$ be the subgroup of the Weyl group $W$ generated by the reflections $r_{\alpha}$ with $\alpha \in \Delta^{\#}$.  Then
$W^{\#}$ is the Weyl group for the root system $\Delta^{\#}$.

Let $(\cdot,\cdot)$ denote the non-degenerate symmetric even invariant bilinear form on $\mathfrak{g}$, which is
normalized by the condition $(\alpha,\alpha)=2$ for a long root $\alpha$ of $\Delta^{\#}$.
Since $\mathfrak{g}$ is
simple, the Killing form $\kappa(\cdot,\cdot)$ is proportional the standard form $(\cdot,\cdot)$.  However, it is
possible that $\kappa=0$.  We can extend this form to $\hat{\mathfrak{g}}$ as follows:
\begin{equation*}
(at^n,bt^m)=\delta_{n,-m}(a,b) \hspace{.5cm} a,b\in \mathfrak{g};\end{equation*}
\begin{equation*}
(\mathbb{C}K+\mathbb{C}D,\mathfrak{g}[t,t^{-1}])=0; \hspace{.5cm} (K,K)=(D,D)=0; \hspace{.5cm} (K,D)=1.
\end{equation*}
Choose $\rho\in\mathfrak{h}^{*}$ (resp. $\hat{\rho}\in\mathfrak{\hat{h}}^{*}$) such that
$(\rho,\alpha_j)=\frac{1}{2}(\alpha_j,\alpha_j)$ for $\alpha_j \in \Pi$ (resp. $\alpha_j \in \hat{\Pi})$.
Note that for $\nu\in\mathfrak{h}^{*}$ we have that $(\hat{\rho},\nu)=(\rho,\nu)$.
Recall \cite{K1} that
\begin{equation}\label{eqn12}(\delta,\hat{\rho})=h^{\vee}=(\rho,\theta)+\frac{1}{2}(\theta,\theta).\end{equation}
Define $\Lambda_{0}\in\mathfrak{\hat{h}}^{*}$ by $\Lambda_{0}(h)=0$ for $h\in\mathfrak{h}\oplus \mathbb{C}D$
and $\Lambda_{0}(K)=1$.

The Weyl denominator of $\mathfrak{g}$ is
\[ R:=\prod_{\alpha\in\Delta_{\bar{0}}^{+}}\left(1-e^{-\alpha}\right)\prod_{\alpha\in\Delta_{\bar{1}}^{+}}\left(1+e^{-\alpha}\right)^{-1}
=\sum_{\eta\in Q^{+}}k_{_{\Pi}}\left(\eta\right)e^{-\eta} \text{, where } k_{_{\Pi}}(\eta)\in\mathbb{Z}.\]
The function $k_{_{\Pi}}$ is extended to $\hat{Q}$, by setting $k_{_{\Pi}}(\eta)=0$ for $\eta\in \hat{Q}\setminus Q^{+}$,
 \cite{KW1}.

\subsection{Vacuum modules}
The vacuum module $V^{k}:=\mbox{Ind}_{\mathfrak{g}+\hat{\mathfrak{n}}^{+}+\hat{\mathfrak{h}}}^{\mathfrak{\hat{g}}}\mathbb{C}_k$ is a
generalized Verma module $M_{I}\left(\lambda\right)$ with $\lambda=k\Lambda_{0}$
and $I\subseteq J:=\{0,1,\ldots,n\}$ corresponding to $\Pi$ (see \cite{GK}).  The
$\hat{\mathfrak{g}}$-module $M_{I}\left(\lambda\right)$ is the quotient of the Verma module $M(\lambda)$ by the submodule
$\mathcal{U}(\mathfrak{\hat{g}}) \mathfrak{n^{-}} v_{\lambda}$, where $v_{\lambda}$ is the highest weight vector of
$M(\lambda)$.  So $V^{k}$ has a unique maximal submodule.

Let \[ \mbox{Irr}:=\left\{ \alpha\in\hat{Q}^{+}\setminus Q\mid\frac{\alpha}{n}\notin\hat{Q}^{+}\mbox{ for
}n\in\mathbb{Z}_{\ge2}\right\} \] and \begin{equation}\label{eqn1} C\left(\lambda\right):=\left\{
\left(m,\xi\right)\in\mathbb{Z}_{\ge1}\times\mbox{Irr}\mid\left(\lambda+\hat{\rho},m\xi\right)-\frac{1}{2}
\left(m\xi,m\xi\right)=0\right\}.\end{equation}
Let $\mathcal{F}^{i}\left(V^{k}\right)$, $i\in\mathbb{N}$ be the Jantzen filtration of the module $V^{k}$. Then by
\cite{GK} \begin{equation}\label{eqn2}
\sum_{i=1}^{\infty}\operatorname{ch}\mathcal{F}^{i}\left(M_{I}\left(\lambda\right)\right)=\sum_{\left(m,\xi\right)\in
C\left(\lambda\right)}a_{m,\xi}\operatorname{ch}M\left(\lambda-m\xi\right)\end{equation} where
\begin{equation}\label{eqn3}
a_{m,\xi}=\sum_{\gamma\in\hat{\Delta}^{+}\setminus\Delta}\sum_{r=1}^{\infty}\left(-1\right)^{\left(r+1\right)p\left(\gamma\right)}\left(\dim\mathfrak{g}_{\gamma}\right)k_{_{\Pi}}\left(m\xi-r\gamma\right).\end{equation}

\begin{lem} \label{lem: not simple if a is non-zero}
The module $V^{k}$ is not simple if and only if there exists $\left(m,\xi\right)\in C\left(\lambda\right)$ such that
$a_{m,\xi}\ne0$.
\end{lem}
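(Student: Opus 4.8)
The plan is to connect simplicity of $V^k$ with the right-hand side of the Jantzen sum formula (\ref{eqn2}) via the following standard principle: the Jantzen filtration $\mathcal{F}^i(V^k)$ has nontrivial first term $\mathcal{F}^1(V^k)$ if and only if $V^k$ is not simple. Indeed, the Jantzen filtration begins with $\mathcal{F}^0(V^k) = V^k$ and $\mathcal{F}^1(V^k)$ is precisely the maximal proper submodule (here using that $V^k = M_I(\lambda)$ has a \emph{unique} maximal submodule, as noted in the excerpt). Therefore $V^k$ is not simple exactly when $\mathcal{F}^1(V^k) \neq 0$, which happens exactly when $\sum_{i=1}^\infty \operatorname{ch}\mathcal{F}^i(V^k) \neq 0$ (since all summands are characters of nonzero modules, hence have nonnegative coefficients and cannot cancel to zero unless every term vanishes).

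Granting that reduction, the remaining task is to show that the left-hand side of (\ref{eqn2}) is nonzero if and only if some coefficient $a_{m,\xi}$ is nonzero for $(m,\xi) \in C(\lambda)$. The direction ``some $a_{m,\xi} \neq 0$ implies the sum is nonzero'' is the crux. First I would observe that each $\operatorname{ch} M(\lambda - m\xi)$ is the character of a Verma module, and that distinct weights $\lambda - m\xi$ contribute linearly independent leading terms. The key point is that in the sum $\sum_{(m,\xi)\in C(\lambda)} a_{m,\xi}\operatorname{ch} M(\lambda - m\xi)$, the highest weights $\lambda - m\xi$ appearing are all strictly less than $\lambda$, and for each occurring weight $\mu = \lambda - m\xi$ the coefficient of $e^{\mu}$ in $\operatorname{ch} M(\mu)$ equals $1$ while no \emph{other} term $\operatorname{ch} M(\lambda - m'\xi')$ with $\lambda - m'\xi' \neq \mu$ contributes to $e^\mu$ with a positive-weight correction that could cancel it. Thus one picks a maximal weight $\mu$ among those $\lambda - m\xi$ with $a_{m,\xi} \neq 0$, and the coefficient of $e^{\mu}$ in the total sum equals $a_{m,\xi}$ (summed over pairs giving the same $\mu$, but these can be grouped), which is nonzero, so the whole expression is nonzero.

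The converse, that the sum being nonzero forces some $a_{m,\xi} \neq 0$, is immediate: if every $a_{m,\xi} = 0$ then the right-hand side of (\ref{eqn2}) is identically zero, whence $\sum_i \operatorname{ch}\mathcal{F}^i(V^k) = 0$, forcing $\mathcal{F}^1(V^k) = 0$ and simplicity. Combining both directions with the filtration principle yields the equivalence claimed in the lemma.

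\textbf{The main obstacle} I anticipate is the cancellation argument in the forward direction: one must rule out that different pairs $(m,\xi)$ and $(m',\xi')$ yield $\lambda - m\xi = \lambda - m'\xi'$ in a way that makes the coefficients $a_{m,\xi}$ cancel. The cleanest route is to fix a weight $\mu$ that is \emph{maximal} in the partial order among $\{\lambda - m\xi : a_{m,\xi} \neq 0\}$; then the only contributions to $e^{\mu}$ come from terms $\operatorname{ch} M(\lambda - m'\xi')$ with $\lambda - m'\xi' \geq \mu$, and by maximality these have $\lambda - m'\xi' = \mu$, so one needs the \emph{total} coefficient $\sum_{\lambda - m\xi = \mu} a_{m,\xi}$ to be nonzero. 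Here I would invoke that all such $a_{m,\xi}$ carry the \emph{same sign} — this should follow from the structure of formula (\ref{eqn3}) together with the constraint defining $C(\lambda)$ — so that no cancellation among them is possible, and the argument goes through. If a uniform-sign statement is not directly available, the fallback is to note that $\operatorname{ch}\mathcal{F}^1(V^k)$ is itself a character with nonnegative coefficients, so the sum in (\ref{eqn2}) is a nonnegative combination of Verma characters, again precluding cancellation.
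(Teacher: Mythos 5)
Your overall skeleton matches the paper's proof: $\mathcal{F}^1(V^k)$ is the (unique) maximal submodule, so non-simplicity is equivalent to the left-hand side of (\ref{eqn2}) being nonzero, and one then asks whether the right-hand side can vanish while some $a_{m,\xi}\neq 0$. The gap is in how you rule out cancellation among terms with $\lambda-m\xi=\lambda-m'\xi'$, which you correctly identify as the crux but then resolve incorrectly. The claim that all $a_{m,\xi}$ contributing to a fixed weight ``carry the same sign'' is unsupported: formula (\ref{eqn3}) involves the signs $(-1)^{(r+1)p(\gamma)}$ and the integers $k_{_{\Pi}}(m\xi-r\gamma)$, which themselves alternate in sign (they are coefficients of the Weyl denominator), so no uniform-sign statement is available; indeed, later in the paper the authors compute $a_{m,\xi}=(-1)^{(m+1)p(\xi)}\dim\mathfrak{g}_{\xi}$ in a special case, which changes sign with $m$. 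Your fallback is also logically invalid: the fact that $\sum_{i\geq 1}\operatorname{ch}\mathcal{F}^i(V^k)$ has nonnegative coefficients as a formal character does not imply that its expansion in Verma characters has nonnegative coefficients (characters of highest weight modules are in general alternating sums of Verma characters), and in any case nonnegativity of the total character would not prevent a grouped coefficient $\sum_{\lambda-m\xi=\mu}a_{m,\xi}$ from vanishing.

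The correct resolution --- which the paper uses implicitly when it invokes linear independence of the characters of \emph{different} Verma modules --- is that the weights $\lambda-m\xi$ for $(m,\xi)\in\mathbb{Z}_{\geq 1}\times\mbox{Irr}$ are pairwise distinct, so each of your groups consists of a single term and no cancellation can occur. This follows from the primitivity built into the definition of $\mbox{Irr}$: if $m\xi=m'\xi'$, write $m=da$, $m'=db$ with $d=\gcd(m,m')$ and $\gcd(a,b)=1$; then $a\xi=b\xi'$ forces $b$ to divide every coordinate of $\xi$, so $\frac{\xi}{b}\in\hat{Q}^{+}$ and hence $b=1$ by the definition of $\mbox{Irr}$, and symmetrically $a=1$, giving $(m,\xi)=(m',\xi')$. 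With this injectivity in hand, your maximal-weight argument closes the backward direction, and the remainder of your proof coincides with the paper's.
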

\begin{proof}
Recall that $\mathcal{F}^{1}(V^{k})$ is the maximal submodule of $V^{k}$.  Hence, the module $V^{k}$ is simple if and
only if its Jantzen filtration is trivial. Since the characters of different Verma modules are linearly independent,
it follows from (\ref{eqn2}) that the
filtration is nontrivial if and only if there exists $\left(m,\xi\right)\in C\left(\lambda\right)$ such that
$a_{m,\xi}\ne0$.
\end{proof}

\begin{lem}\label{lem: equation}
Let $\mu\in Q^{+}$. If $k_{_{\Pi}}\left(\mu\right)\ne0$, then $\left(\rho,\mu\right)=\frac{1}{2}\left(\mu,\mu\right)$.
\end{lem}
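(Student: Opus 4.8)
The plan is to pass to the ``rescaled'' denominator $e^{\rho}R$ and to recast the asserted identity as a statement that all of its exponents lie on a single sphere. Concretely, if $k_{_{\Pi}}(\mu)\neq 0$ then $e^{\rho-\mu}$ occurs in $e^{\rho}R$, and setting $\nu=\rho-\mu$ one computes $(\nu,\nu)-(\rho,\rho)=-2\bigl((\rho,\mu)-\tfrac12(\mu,\mu)\bigr)$, so the claim $(\rho,\mu)=\tfrac12(\mu,\mu)$ is \emph{equivalent} to $(\nu,\nu)=(\rho,\rho)$. Thus it suffices to show that every exponent $\nu$ occurring with nonzero coefficient in
\[ e^{\rho}R=\prod_{\alpha\in\Delta_{\bar{0}}^{+}}\left(e^{\alpha/2}-e^{-\alpha/2}\right)\prod_{\beta\in\Delta_{\bar{1}}^{+}}\left(e^{\beta/2}+e^{-\beta/2}\right)^{-1} \]
satisfies $(\nu,\nu)=(\rho,\rho)$; here I have used $\rho=\rho_{\bar{0}}-\rho_{\bar{1}}$, where $\rho_{\bar{0}}$ and $\rho_{\bar{1}}$ are the half-sums of the positive even and odd roots. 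Note that the top exponent is $\rho$ (with coefficient $k_{_{\Pi}}(0)=1$) and every other exponent is $\rho-\mu$ with $\mu\in Q^{+}$.

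A first structural observation, which handles the principal family of exponents, is that $e^{\rho}R$ is anti-invariant under the Weyl group $W$: writing $\epsilon(w)=(-1)^{\ell(w)}$, the even factor is the Weyl denominator of the even root system and satisfies $w\mapsto\epsilon(w)$ under $W$, while the odd factor is $W$-invariant because each term $e^{\beta/2}+e^{-\beta/2}$ depends only on the pair $\{\pm\beta\}$ and $W$ permutes the odd roots while commuting with negation. Hence $w(e^{\rho}R)=\epsilon(w)\,e^{\rho}R$ for all $w\in W$, so the set of exponents is $W$-stable and $(\cdot,\cdot)$ is constant along each $W$-orbit; in particular the orbit $W\rho$ has the required length $(\rho,\rho)$. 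However, this does not finish the argument: because $\rho$ need not be $W$-dominant, there are in general exponents lying in $W$-orbits other than $W\rho$, and anti-invariance alone does not pin down their length.

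The genuinely harder point is therefore to control these remaining exponents, and one cannot do so term-by-term: already in the purely even case of $\mathfrak{sl}_{3}$ the ``wrong-length'' monomials appear in the expansion and survive only by mutual cancellation, so a denominator identity is indispensable. Here I would invoke the Kac--Wakimoto--Gorelik super denominator identity, which expresses $e^{\rho}R$ as a $W^{\#}$-antisymmetrization of $e^{\rho}\prod_{\beta\in S}(1+e^{-\beta})^{-1}$, where $S=\{\beta_{1},\dots,\beta_{d}\}$ is a maximal set of pairwise orthogonal isotropic roots taken simple in a suitable positive system, so that $(\rho,\beta_{i})=\tfrac12(\beta_{i},\beta_{i})=0$ and $(\beta_{i},\beta_{j})=0$ for all $i,j$. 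Every exponent is then of the form $w\bigl(\rho-\sum_{i}c_{i}\beta_{i}\bigr)$ with $w\in W^{\#}\subseteq W$ and $c_{i}\geq 0$, and expanding the inner length gives $(\rho,\rho)$ by the orthogonality relations, a value preserved by $w$. This yields $(\nu,\nu)=(\rho,\rho)$ for all exponents and hence the lemma. The main obstacle is precisely this last step: quoting the denominator identity in the form adapted to the fixed system $\Pi$ and reconciling the isotropic data $S$ with that choice, which is where essentially all of the content resides (the quadratic relation itself being, in representation-theoretic terms, the statement that the terms of $R$ are linked to the trivial module and thus share its Casimir eigenvalue).
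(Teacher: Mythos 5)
Your route is genuinely different from the paper's, and its skeleton is sound: the reduction of the claim to the statement that every exponent $\nu$ of $e^{\rho}R$ satisfies $(\nu,\nu)=(\rho,\rho)$ is a correct equivalence, the $W$-anti-invariance observation is correct (and correctly judged insufficient), and the computation $\bigl(w(\rho-\sum_i c_i\beta_i),w(\rho-\sum_i c_i\beta_i)\bigr)=(\rho,\rho)$ from $(\rho,\beta_i)=(\beta_i,\beta_j)=0$ is exactly what the Kac--Wakimoto identity delivers. But the step you yourself flag as the main obstacle is a real gap, not a formality. The lemma is applied in the paper to every $\Pi$ of Table~I, and several of these (all of the $k+h^{\vee}<0$ rows; for instance $\Pi=\{\delta_1-\delta_2,\dots,\delta_n-\varepsilon_1,\varepsilon_1-\varepsilon_2,\dots,\varepsilon_m\}$ for $B(m|n)$) contain only one isotropic simple root while the defect is $n\geq 2$, so no maximal isotropic set $S$ sits inside $\Pi$ and the identity cannot be quoted in a form adapted to that fixed $\Pi$. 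To close the gap you must pass to a positive system $\Pi'\supset S$ and transport the conclusion back, which requires showing (i) that $e^{\rho}R$ is unchanged under an odd reflection at an isotropic simple root $\beta$ (this follows from $1+e^{\beta}=e^{\beta}(1+e^{-\beta})$ together with $\rho'=\rho+\beta$), so the exponent sets for $\Pi$ and $\Pi'$ coincide, and (ii) that $(\rho',\rho')=(\rho,\rho)$ (this follows from $(\rho,\beta)=\tfrac12(\beta,\beta)=0$). Neither point is hard, but without them your argument does not prove the statement for the systems $\Pi$ on which the paper actually relies.

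The paper's own proof is the one you mention only in your closing parenthesis, and it bypasses the denominator identity entirely: $\operatorname{ch}V^{k}=\operatorname{ch}M(k\Lambda_0)\cdot R=\sum_{\mu}k_{_{\Pi}}(\mu)\operatorname{ch}M(k\Lambda_0-\mu)$ exhibits the (unique) Verma decomposition of the character of a highest weight module, on which the Casimir acts by a single scalar; linkage then forces $(k\Lambda_0+\hat\rho,k\Lambda_0+\hat\rho)=(k\Lambda_0-\mu+\hat\rho,k\Lambda_0-\mu+\hat\rho)$ whenever $k_{_{\Pi}}(\mu)\neq0$, which is the desired identity since $(\Lambda_0,\mu)=0$ and $(\hat\rho,\mu)=(\rho,\mu)$. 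That argument is uniform in $\Pi$, needs no isotropic sets, and is two lines; your approach buys a purely combinatorial fact about the support of $e^{\rho}R$ (all exponents on one sphere), but at the cost of invoking the denominator identity plus the odd-reflection bookkeeping described above.
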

\begin{proof}
One has that \begin{eqnarray*}
\mbox{ch}V^{k} & \stackrel{}{=} & \mbox{ch}M\left(k\Lambda_{0}\right)\cdot\frac{\prod_{\Delta_{\bar{0}}^{+}}\left(1-e^{-\alpha}\right)}{\prod_{\Delta_{\bar{1}}^{+}}\left(1+e^{-\alpha}\right)}\\
 & = & \mbox{ch}M\left(k\Lambda_{0}\right)\cdot\sum_{\mu\in Q^{+}}k_{_{\Pi}}\left(\mu\right)e^{-\mu}\\
 & = & \sum_{\mu\in Q^{+}}k_{_{\Pi}}\left(\mu\right)\mbox{ch}M\left(k\Lambda_{0}-\mu\right).\end{eqnarray*}
The character of a highest weight module can be uniquely written as a
linear combination of characters of Verma modules, and the Casimir
operator gives the same scalar on each of these Verma modules \cite{K1}. Hence,
if $k_{_{\Pi}}\left(\mu\right)\ne0$ then \[
\left(k\Lambda_{0}+\hat{\rho},k\Lambda_{0}+\hat{\rho}\right)=\left(k\Lambda_{0}-\mu+\hat{\rho},k\Lambda_{0}-\mu+\hat{\rho}\right),\]
which implies that $\left(\rho,\mu\right)=\frac{1}{2}\left(\mu,\mu\right)$.
\end{proof}

\subsection{The Weyl denominator expansion}

The aim of this section is to expand $R$ using the Weyl denominator identity given in \cite{KW1}.
For a finite set $X:=\left\{ \lambda_{i}\right\} _{i=1}^{r}\subset\hat{\mathfrak{h}}^{*}$, let $C_{X}$ be the collection of
elements of the form $\sum_{i=1}^{r}\sum_{\mu<\lambda_{i}}c_{\mu}e^{\mu}$, where $c_{\mu}\in\mathbb{Z}$. Let $C$ be the
union of all $C_{X}$ over finite subsets of $\hat{\mathfrak{h}}^{*}$. Note that $x,y\in C$ implies $x+y,\, xy\in C$. We will
expand $R$ to an element of $C$.

The \emph{defect} of $\mathfrak{g}$, denoted by $\operatorname{def} \mathfrak{g}$, is the dimension of a maximal isotropic
subspace of $\mathfrak{h}^{*}_{\mathbb{R}}:=\sum_{\alpha\in\Delta}\mathbb{R}\alpha$. A subset $S$ of $\Delta$ is called
\emph{isotropic} if it spans an isotropic subspace of $\mathfrak{h}^{*}_{\mathbb{R}}$. It is called \emph{maximal isotropic} if
$\left|S\right|=\operatorname{def} \mathfrak{g}$. By \cite{KW1}, one can always choose a set of of simple roots that contains
a given maximal isotropic set $S$. Fix a set of simple roots $\Pi$ which contains a maximal isotropic set $S$. Denote
$\mathbb{Z}S:=\left\{ \sum_{\beta\in S}n_{\beta}\beta\mid n_{\beta}\in\mathbb{Z}\right\} $ and $\mathbb{N}S:=\left\{
\sum_{\beta\in S}n_{\beta}\beta\mid n_{\beta}\in\mathbb{N}\right\} $. For $\mu=\sum_{\beta\in
S}n_{\beta}\beta\in\mathbb{N}S$, define the height of $\mu$ to be $\operatorname{ht} \mu=\sum n_{\beta}$.

For $w\in W^{\#}$, let \[T_{w}=\left\{ \beta\in S\mid w(\beta)\in\Delta^{-}\right\},\]
and define $\left|w\right|\in \operatorname{Hom}_{\mathbb{Z}}(\mathbb{Z}S,Q)$ such that for $\beta\in
S\subset\Pi$,
\[ \left|w\right|\left(\beta\right):=\left\{ \begin{array}{cc}
-w\left(\beta\right), & \textrm{if }\beta\in T_{w};\\
\ \ w\left(\beta\right), & \textrm{if }\beta\not\in T_{w}.\end{array}\right.\] Note that $\left|w\right|(\mu)\in Q^{+}$ for any
$\mu\in\mathbb{N}S$.   Define
$\varphi:W^{\#}\rightarrow -Q^{+}$ by
\[ \varphi\left(w\right):=\sum_{\beta\in T_{w}}w(\beta).\]

\begin{lem}\label{lem: R-Formula} Suppose $\Pi$ contains a maximal isotropic set $S$.  Let $R$ be the Weyl denominator
of $\mathfrak{g}$.  Then
\label{R-Formula}\[ R=\sum_{\eta\in Q^{+}}k_{_{\Pi}}\left(\eta\right)e^{-\eta}=\sum_{w\in
W^{\#}}\sum_{\mu\in\mathbb{N}S}\left(-1\right)^{l\left(w\right)+\operatorname{ht}\mu}e^{
\varphi\left(w\right)-\left|w\right|\left(\mu\right)+w(\rho)-\rho}.\]

\end{lem}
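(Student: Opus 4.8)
The plan is to start from the Weyl denominator identity of Kac--Wakimoto, which expresses $R$ as a sum over $W^{\#}$ of terms of the form $e^{w(\rho)-\rho}$ divided by a product over the maximal isotropic set $S$. More precisely, the identity states that
\[ R = \sum_{w\in W^{\#}}\left(-1\right)^{l(w)}\frac{e^{w(\rho)-\rho+\varphi(w)}}{\prod_{\beta\in S}\bigl(1+e^{-|w|(\beta)}\bigr)}, \]
where the sign $(-1)^{l(w)}$ is the length sign and the numerator carries the shift $\varphi(w)$ that records exactly which simple isotropic roots $w$ sends to negative roots. The first step, then, is to recall this identity in the precise normalization used in \cite{KW1} and to reconcile the notation there with the combinatorial data $T_{w}$, $|w|$, and $\varphi(w)$ defined just above the statement.

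Next I would expand each denominator as a geometric series. For a single isotropic root $\beta\in S$, the factor $\left(1+e^{-|w|(\beta)}\right)^{-1}$ expands as $\sum_{n\geq 0}(-1)^{n}e^{-n|w|(\beta)}$; this is a legitimate element of the completion $C$ because $|w|(\beta)\in Q^{+}$, so every exponent is $\leq 0$. Taking the product over all $\beta\in S$, the denominator $\prod_{\beta\in S}\left(1+e^{-|w|(\beta)}\right)^{-1}$ becomes
\[ \sum_{\mu\in\mathbb{N}S}\left(-1\right)^{\operatorname{ht}\mu}e^{-|w|(\mu)}, \]
where $\mu=\sum_{\beta\in S}n_{\beta}\beta$ ranges over $\mathbb{N}S$, the sign $(-1)^{\operatorname{ht}\mu}=\prod_{\beta}(-1)^{n_{\beta}}$ collects the individual series signs, and $|w|(\mu)=\sum_{\beta}n_{\beta}|w|(\beta)$ follows since $|w|$ is a $\mathbb{Z}$-linear map. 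Substituting this back and combining the numerator exponent $w(\rho)-\rho+\varphi(w)$ with the denominator exponent $-|w|(\mu)$ gives exactly the claimed double sum, with total sign $(-1)^{l(w)+\operatorname{ht}\mu}$.

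The main obstacle is making the manipulation of infinite sums rigorous within the ring $C$ and checking that the resulting expression genuinely lies in $C$ so that the rearrangement is valid. Concretely, I must verify that for fixed $w$ the family of exponents $\varphi(w)-|w|(\mu)+w(\rho)-\rho$ is bounded above by $w(\rho)-\rho$ in the partial order (so the $w$-summand lies in some $C_{X}$), and that summing the finitely many $w\in W^{\#}$ keeps us in $C$; the observation that $C$ is closed under addition and multiplication, recorded in the excerpt, is what licenses the term-by-term geometric expansion and its reassembly. A secondary point requiring care is the bookkeeping of signs: I would confirm that the sign produced by the denominator expansion is precisely $(-1)^{\operatorname{ht}\mu}$ and not, say, an extra factor coming from the Kac--Wakimoto normalization of the numerator, and that $(-1)^{l(w)}$ is faithfully the Weyl-length sign on $W^{\#}$ rather than on the full Weyl group $W$. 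Once these consistency checks are in place, matching the result against the left-hand expansion $\sum_{\eta\in Q^{+}}k_{_{\Pi}}(\eta)e^{-\eta}$ is immediate, since both are equal to $R$ as elements of $C$.
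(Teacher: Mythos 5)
Your proposal is correct and follows essentially the same route as the paper: start from the Kac--Wakimoto denominator identity, rewrite the factors indexed by $\beta\in T_{w}$ so that the shift $\varphi(w)$ appears in the numerator and each denominator factor becomes $1+e^{-|w|(\beta)}$ with $|w|(\beta)\in Q^{+}$, and then expand each such factor as a geometric series to obtain the double sum over $W^{\#}$ and $\mathbb{N}S$. The only step you defer rather than carry out is the explicit conversion of $\prod_{\beta\in S}\bigl(1+e^{-w(\beta)}\bigr)$ into $e^{-\varphi(w)}\prod_{\beta\in S}\bigl(1+e^{-|w|(\beta)}\bigr)$, which is exactly the intermediate line the paper includes.
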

\begin{proof}
The assertion follows from the following computations:

\begin{eqnarray*}
R & \stackrel{\mbox{by~\cite{KW1}}}{=} & \sum_{w\in W^{\#}}\left(-1\right)^{l\left(w\right)}w\left(\frac{e^{\rho}}{\prod_{\beta\in S}\left(1+e^{-\beta}\right)}\right)e^{-\rho}\\
 & = & \sum_{w\in W^{\#}}\left(-1\right)^{l\left(w\right)}\frac{e^{w(\rho)-\rho}}{\prod_{\beta\in S}\left(1+e^{-w(\beta)}\right)}\\
 & = & \sum_{w\in W^{\#}}\left(-1\right)^{l\left(w\right)}\frac{e^{w(\rho)-\rho+\varphi\left(w\right)}}{\prod_{\beta\in S}\left(1+e^{-\left|w\right|\left(\beta\right)}\right)}\\
 & = & \sum_{w\in W^{\#}}\sum_{\mu\in\mathbb{N}S}\left(-1\right)^{l\left(w\right)+\operatorname{ht}\mu}e^{\varphi\left(w\right)-\left|w\right|\left(\mu\right)+w\left(\rho\right)-\rho}.\end{eqnarray*}
\end{proof}

\begin{cor}\label{cor3}  Suppose $\Pi$ contains a maximal isotropic set $S$.
If $k_{_{\Pi}}\left(\eta\right)\neq 0$, then there exists $w\in W^{\#}$ and $\mu\in\mathbb{N}S$
such that $$-\eta=\varphi\left(w\right)-\left|w\right|\left(\mu\right)+w(\rho)-\rho.$$
\end{cor}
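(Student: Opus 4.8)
The plan is to read the result off directly from the two expressions for $R$ furnished by Lemma~\ref{lem: R-Formula}. On one side that identity records the coefficient of each monomial $e^{-\eta}$ as $k_{_{\Pi}}(\eta)$; on the other side it expresses the very same element of $C$ as a double sum of monomials $e^{\varphi(w)-|w|(\mu)+w(\rho)-\rho}$ indexed by $(w,\mu)\in W^{\#}\times\mathbb{N}S$. Since both expansions lie in $C$, where distinct monomials $e^{\nu}$ are linearly independent, I would simply equate the coefficient of the fixed monomial $e^{-\eta}$ computed from each side.

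Concretely, I would fix $\eta\in Q^{+}$ with $k_{_{\Pi}}(\eta)\ne 0$ and collect, on the right-hand side, exactly those index pairs $(w,\mu)$ whose exponent equals $-\eta$; call this set $P(\eta)$. Comparing coefficients then gives
\[ k_{_{\Pi}}(\eta)=\sum_{(w,\mu)\in P(\eta)}\left(-1\right)^{l(w)+\operatorname{ht}\mu}. \]
Were $P(\eta)$ empty, the right-hand side would be an empty sum, hence $0$, contradicting $k_{_{\Pi}}(\eta)\ne 0$. Therefore $P(\eta)\ne\varnothing$, and any pair $(w,\mu)$ in it furnishes $w\in W^{\#}$ and $\mu\in\mathbb{N}S$ with $-\eta=\varphi(w)-|w|(\mu)+w(\rho)-\rho$, which is the claim.

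The only point requiring care — and it is the closest thing here to an obstacle — is to justify that this coefficient comparison is legitimate, i.e.\ that the coefficient of $e^{-\eta}$ on the right is a genuine finite sum so that the equality in $C$ is meaningful. I would argue this from the two finiteness features already built into the setup: $W^{\#}$ is the finite Weyl group of $\Delta^{\#}$, so only finitely many $w$ can occur; and for each fixed $w$ the condition $|w|(\mu)=\eta+\varphi(w)+w(\rho)-\rho$ constrains $\mu$ to finitely many choices, since $|w|$ sends every $\beta\in S$ into $Q^{+}$, so $\operatorname{ht}\mu$ is bounded once the target in $Q^{+}$ is pinned down. Hence $P(\eta)$ is finite and the comparison above is valid, with no further computation needed.
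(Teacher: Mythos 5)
Your argument is correct and is exactly how the paper intends the corollary to be read: it follows immediately from Lemma~\ref{lem: R-Formula} by comparing the coefficient of $e^{-\eta}$ in the two expansions of $R$, and the paper gives no separate proof. Your extra care about finiteness of the index set $P(\eta)$ (finiteness of $W^{\#}$ plus $\left|w\right|(\beta)\in\Delta^{+}$ for $\beta\in S$, so only finitely many $\mu$ can reach a fixed target in $Q^{+}$) is a valid justification of the coefficient comparison in $C$.
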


\section{Root systems}

In this section, we describe the root systems of the simple finite
dimensional Lie superalgebras which appear in the present paper \cite{FSS}. A root system
of a simple finite dimensional Lie superalgebra $\mathfrak{g}$ is
described in terms of a basis $\{\varepsilon_{i},\delta_{j}\mid1\leq i\leq m,1\leq j\leq n\}$,
with the bilinear form $(\cdot,\cdot)$ normalized such that $\left(\alpha,\alpha\right)=2$
for a long root $\alpha\in\Delta^{\#}$.  We can identify $\mathfrak{h}^{*}$
with a linear subspace of $V:=\mbox{span}\left\{ \varepsilon_{1},\ldots,\varepsilon_{m},\delta_{1},\ldots,\delta_{n}\right\} $,
and write $\mu\in\mathfrak{h}^{*}$ as $\mu=\sum_{i=1}^{m}c_{\varepsilon_{i}}(\mu)\varepsilon_{i}+\sum_{j=1}^{n}c_{\delta_{j}}(\mu)\delta_{j}$,
with coefficients $c_{\varepsilon_{i}}(\mu),c_{\delta_{j}}(\mu)\in\mathbb{C}$.\\

For $A\left(m-1|n-1\right)=\mathfrak{sl}(m|n)$, we identify $\mathfrak{h}^{*}$ with the
linear subspace of $V$ given by \[
\mathfrak{h}^{*}=\left\{ a_{1}\varepsilon_{1}+\ldots+a_{m}\varepsilon_{m}+b_{1}\delta_{1}+\ldots+b_{n}\delta_{n}\mid
\sum_{i=1}^{m}a_{i}+\sum_{j=1}^{n}b_{j}=0\right\},\]
and choose the normalization
\[
(\varepsilon_{i},\varepsilon_{j})=\delta_{ij},\hspace{1cm}(\delta_{i},\delta_{j})=-\delta_{ij},\hspace{1cm}
(\varepsilon_{i},\delta_{j})=0.\]
 The root system  is $\Delta=\Delta_{\bar{0}}\cup\Delta_{\bar{1}}$
where \begin{align*}
\Delta_{\bar{0}} & =\{\varepsilon_{i}-\varepsilon_{j}\mid\text{ }1\leq i<j\leq m\}\cup\{\delta_{i}-\delta_{j}\mid
\text{ }1\leq i<j\leq n\},\\
\Delta_{\bar{1}} & =\{\pm(\varepsilon_{i}-\delta_{j})\mid\text{ }1\leq i\leq m\text{, }1\leq j\leq n\}.\end{align*}
 We may assume without loss of generality that $m\geq n$, since $A(m-1|n-1)\cong A(n-1|m-1)$.
Thus, \[
\Delta^{\#}=\{\varepsilon_{i}-\varepsilon_{j}\mid\text{ }1\leq i,j\leq m\text{, }i\neq j\}.\]
 We extend the action of $W^{\#}$ to $\mbox{span}\left\{ \varepsilon_{1},\ldots,\varepsilon_{m}\right\} $
by the trivial action on $\sum_{i=1}^{m}\varepsilon_{i}$. Then $W^{\#}$
is the permutation group of $\left\{ \varepsilon_{1},\ldots,\varepsilon_{m}\right\} $.
\\

For $B(m|n)=\mathfrak{osp}(2m+1|2n)$ with $m\geq n+1$, we identify $\mathfrak{h}^{*}$ with
$V$, and choose the normalization
\[
(\varepsilon_{i},\varepsilon_{j})=\delta_{ij},\hspace{1cm}(\delta_{i},\delta_{j})=-\delta_{ij},\hspace{1cm}
(\varepsilon_{i},\delta_{j})=0.\] The root system is $\Delta=\Delta_{\bar{0}}\cup\Delta_{\bar{1}}$
where\begin{eqnarray*}
\Delta_{\bar{0}} & = & \left\{ \pm\varepsilon_{i}\pm\varepsilon_{j},\pm\varepsilon_{i},\pm\delta_{k}\pm\delta_{l},\pm2\delta_{k}\mid1\le i<j\le m,\,1\le k<l\le n\right\} ,\\
\Delta_{\bar{1}} & = & \left\{ \pm\varepsilon_{i}\pm\delta_{j},\pm\delta_{j}\mid1\le i\le m,\,1\le j\le n\right\} .\end{eqnarray*}
Thus, \[
\Delta^{\#}=\left\{ \pm\varepsilon_{i}\pm\varepsilon_{j},\pm\varepsilon_{i}\mid1\leq i<j\leq m\right\} .\]
Then $W^{\#}$ is the group of signed permutations of $\left\{ \varepsilon_{1},\ldots,\varepsilon_{m}\right\} $.\\

For $B\left(n|m\right)=\mathfrak{osp}(2n+1|2m)$ with $m\ge n$, we identify $\mathfrak{h}^{*}$
with $V$, and choose the normalization
\[
(\varepsilon_{i},\varepsilon_{j})=\frac{1}{2}\delta_{ij},\hspace{1cm}(\delta_{i},\delta_{j})=-\frac{1}{2}\delta_{ij},
\hspace{1cm}(\varepsilon_{i},\delta_{j})=0.\] The root system is $\Delta=\Delta_{\bar{0}}\cup\Delta_{\bar{1}}$
where\begin{eqnarray*}
\Delta_{\bar{0}} & = & \left\{ \pm\delta'_{i}\pm\delta_{j}',\pm\delta_{i}',\pm\varepsilon_{k}'\pm\varepsilon_{l}',\pm2\varepsilon'_{k}\mid1\le i<j\le m,\,1\le k<l\le n\right\} ,\\
\Delta_{\bar{1}} & = & \left\{ \pm\delta'_{i}\pm\varepsilon'_{j},\pm\varepsilon'_{j}\mid1\le i\le m,\,1\le j\le n\right\} .\end{eqnarray*}
Here\[
\Delta^{\#}=\left\{ \pm\varepsilon_{i}\pm\varepsilon_{j},\pm2\varepsilon_{i}\mid1\leq i<j\leq m\right\} .\]
Then $W^{\#}$ is the group of signed permutations of $\left\{ \varepsilon_{1},\ldots,\varepsilon_{m}\right\} $.\\

For $D(m|n)=\mathfrak{osp}(2m|2n)$ with $m\geq n+1$, we identify $\mathfrak{h}^{*}$ with
$V$, and choose the normalization
\[
(\varepsilon_{i},\varepsilon_{j})=\delta_{ij},\hspace{1cm}(\delta_{i},\delta_{j})=-\delta_{ij},\hspace{1cm}
(\varepsilon_{i},\delta_{j})=0.\] The root system  is $\Delta=\Delta_{\bar{0}}\cup\Delta_{\bar{1}}$
where\begin{align*}
\Delta_{\bar{0}} & =\left\{ \pm\varepsilon_{i}\pm\varepsilon_{j},\pm\delta_{k}\pm\delta_{l},\pm2\delta_{k}\mid1\leq i<j\leq m,\ 1\leq k<l\leq n\right\} ,\\
\Delta_{\bar{1}} & =\left\{ \pm\varepsilon_{i}\pm\delta_{k}\mid1\leq i\leq m,\ 1\leq k\leq n\right\} .\end{align*}
Thus, \[
\Delta^{\#}=\left\{ \pm\varepsilon_{i}\pm\varepsilon_{j}\mid1\leq i<j\leq m\right\} .\]
 Then $W^{\#}$ is the group of signed permutations of $\left\{ \varepsilon_{1},\ldots,\varepsilon_{m}\right\} $
which change an even number of the signs.\\

For $D\left(n|m\right)=\mathfrak{osp}(2n|2m)$ with $m\ge n$, we identify $\mathfrak{h}^{*}$
with $V$, and choose the normalization
\[
(\varepsilon_{i},\varepsilon_{j})=\frac{1}{2}\delta_{ij},\hspace{1cm}(\delta_{i},\delta_{j})=-\frac{1}{2}\delta_{ij},
\hspace{1cm}(\varepsilon_{i},\delta_{j})=0.\] The root system is $\Delta=\Delta_{\bar{0}}\cup\Delta_{\bar{1}}$
where\begin{eqnarray*}
\Delta_{\bar{0}} & = & \left\{ \pm\varepsilon_{i}\pm\varepsilon_{j},\pm2\varepsilon_{i},\pm\delta_{k}\pm\delta_{l}\mid1\le i<j\le m,\,1\le k<l\le n\right\} ,\\
\Delta_{\bar{1}} & = & \left\{ \pm\varepsilon_{i}\pm\delta_{k}\mid1\le i\le m,\,1\le k\le n\right\} .\end{eqnarray*}
Here\[
\Delta^{\#}=\left\{ \pm\varepsilon_{i}\pm\varepsilon_{j},\pm2\varepsilon_{i}\mid1\leq i<j\leq m\right\} .\]
Then $W^{\#}$ is the group of signed permutations of $\left\{ \varepsilon_{1},\ldots,\varepsilon_{m}\right\} $.\\

The defect is $n$ for all of the Lie superalgebras described above, so we assume that $n\geq 2$.
We extend the action of $W^{\#}$ to $V$ by the trivial action on the linear span of
$\left\{ \delta_{1},\ldots,\delta_{n}\right\} $.

\section{Simplicity of vacuum modules}
\subsection{Preliminaries}
Let $\mathfrak{g}$ be an (almost) simple finite dimensional Lie superalgebra with bilinear form $(\cdot,\cdot)$ normalized by the
condition that $(\alpha,\alpha)=2$ for a long root of $\Delta^{\#}$.  Let $h^{\vee}$ be the dual Coxeter number
of $\mathfrak{g}$.  Note that $h^{\vee}\in\frac{1}{2}\mathbb{Z}_{\geq 0}$ (see Table~II).
Fix a set of simple roots $\Pi=\{\beta_1,\ldots,\beta_n\}$ and
denote the highest weight $\theta$.

\begin{lem}\label{lem: N}  Suppose that $(\theta,\theta)$ is a non-zero integer.
Let $k\in\mathbb{Q}$ such that $\frac{k+h^{\vee}}{(\theta,\theta)}>0$.
Choose $q\in 2\mathbb{Z}_{\geq 1}$ such that $q\left(\frac{k+h^{\vee}}{(\theta,\theta)}\right)\in\mathbb{Z}$  and
$q>\frac{(\rho,\theta)}{k+h^{\vee}}$. Define
\begin{equation}\label{eqn17}N:= 2q\left(\frac{k+h^{\vee}}{(\theta,\theta)}\right)
-\frac{2(\rho,\theta)}{(\theta,\theta)}.\end{equation}
\begin{enumerate}
\item
If $\frac{\theta}{2}\not\in\Delta$ and $\frac{2(\rho,\theta)}{(\theta,\theta)}\in\mathbb{Z}$,
then $(N,q\delta-\theta)\in C(k\Lambda_{0})$.
\item
If $\frac{\theta}{2}\in\Delta$ and $\frac{2(\rho,\theta)}{(\theta,\theta)}\in\frac{1}{2}+\mathbb{Z}$,
then $(2N,\frac{q}{2}\delta-\frac{\theta}{2})\in C(k\Lambda_{0})$ and $2N$ is an odd integer.
\end{enumerate}
\end{lem}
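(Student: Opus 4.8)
The plan is to check, in each of the two cases, that the proposed pair satisfies the three defining conditions of $C(k\Lambda_0)$ in (\ref{eqn1}): that the first entry is a positive integer, that the second lies in $\mathrm{Irr}$, and that the quadratic relation $(k\Lambda_0+\hat\rho,m\xi)-\tfrac12(m\xi,m\xi)=0$ holds. The engine is a short computation with the extended bilinear form. A direct check of form values shows that under the form $\Lambda_0$ corresponds to $D$ and $\delta$ to $K$, so that $(\Lambda_0,\Lambda_0)=(\delta,\delta)=0$, $(\Lambda_0,\delta)=1$, and $(\Lambda_0,\nu)=(\delta,\nu)=0$ for $\nu\in\mathfrak h^*$; combined with $(\hat\rho,\delta)=h^\vee$ from (\ref{eqn12}) and $(\hat\rho,\theta)=(\rho,\theta)$, this gives for $\xi=q\delta-\theta$ that $(\xi,\xi)=(\theta,\theta)$ and $(k\Lambda_0+\hat\rho,\xi)=q(k+h^\vee)-(\rho,\theta)$. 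Since the quadratic relation factors as $m\big[(k\Lambda_0+\hat\rho,\xi)-\tfrac m2(\xi,\xi)\big]$ and $m$ will be nonzero, it is equivalent to $m=2[q(k+h^\vee)-(\rho,\theta)]/(\theta,\theta)$, which is exactly the value $N$ fixed in (\ref{eqn17}); the same computation with $\xi'=\tfrac q2\delta-\tfrac\theta2$ gives $(\xi',\xi')=\tfrac14(\theta,\theta)$ and forces the first entry to be $2N$, as claimed.

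For the first entry to be a positive integer, integrality of $N$ follows because $q(k+h^\vee)/(\theta,\theta)\in\mathbb Z$ by the choice of $q$ and $2(\rho,\theta)/(\theta,\theta)\in\mathbb Z$ by the Case~1 hypothesis; in Case~2 I would instead write $2N=4\,[q(k+h^\vee)/(\theta,\theta)]-2\,[2(\rho,\theta)/(\theta,\theta)]$ and observe, using $2(\rho,\theta)/(\theta,\theta)\in\tfrac12+\mathbb Z$, that this is an even integer minus an odd integer, hence an odd integer, which simultaneously yields integrality and the asserted parity of $2N$. Positivity is uniform across both signs of $(\theta,\theta)$: multiplying $N/2=q(k+h^\vee)/(\theta,\theta)-(\rho,\theta)/(\theta,\theta)$ by the positive number $(\theta,\theta)/(k+h^\vee)$ converts the condition $N>0$ into $q-(\rho,\theta)/(k+h^\vee)>0$, which is precisely the requirement $q>(\rho,\theta)/(k+h^\vee)$ imposed on $q$. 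Thus the first entry is a positive integer in each case.

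The remaining and most delicate point is $\xi\in\mathrm{Irr}$, i.e.\ $\xi\in\hat Q^+\setminus Q$ together with primitivity. Membership in $\hat Q^+\setminus Q$ follows by writing $\delta=\alpha_0+\theta$: then $q\delta-\theta=q\alpha_0+(q-1)\theta$ and $\tfrac q2\delta-\tfrac\theta2=\tfrac q2\alpha_0+\tfrac{q-1}2\theta$, whose $\alpha_0$-coefficients $q$ and $q/2$ are positive (so $\xi\notin Q$), and in Case~2 the hypothesis $\tfrac\theta2\in\Delta$ forces every mark of $\theta$ to be even, which is exactly what makes $\tfrac{q-1}2\theta$ lie in $Q^+$ so that $\xi'\in\hat Q^+$. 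For primitivity I would suppose $\xi=n\eta$ with $\eta\in\hat Q^+$ and $n\geq 2$, and project along $\hat Q\to\hat Q/Q\cong\mathbb Z$: this forces $n\mid q$ (resp.\ $n\mid q/2$), whence $\eta-\tfrac qn\delta=-\tfrac\theta n$ (resp.\ $-\tfrac{\theta}{2n}$) lies in $\hat Q\cap\mathfrak h^*=Q$, giving $\tfrac\theta n\in Q$ (resp.\ $\tfrac{\theta/2}{n}\in Q$). Thus primitivity of $\xi$ reduces to primitivity of $\theta$ in $Q$ in Case~1 and of $\theta/2$ in $Q$ in Case~2. This reduction is the crux and the main obstacle: the dichotomy $\tfrac\theta2\notin\Delta$ versus $\tfrac\theta2\in\Delta$ is precisely the dichotomy between $\theta$ being primitive and its failing to be primitive only through the non-reduced relation $\theta=2(\theta/2)$ with $\theta/2\in\Delta$, and I expect to settle it by reading off the highest roots from the root-system descriptions in Section~2.
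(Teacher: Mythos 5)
Your proposal is correct and follows essentially the same route as the paper's own (very terse) proof: one checks that the first entry is a positive integer, that $q\delta-\theta$ (resp.\ $\frac{q}{2}\delta-\frac{\theta}{2}$) lies in $\operatorname{Irr}$, and that the quadratic condition in (\ref{eqn1}) forces the first entry to be exactly $N$ (resp.\ $2N$) --- the paper compresses the integrality, positivity and parity statements into a citation of Table~II, whereas you derive them directly from the stated hypotheses on $q$ and $\frac{2(\rho,\theta)}{(\theta,\theta)}$. You are in fact more scrupulous than the printed proof about the primitivity clause in the definition of $\operatorname{Irr}$, and the one step you defer --- that $\theta$ (resp.\ $\frac{\theta}{2}$) is primitive in $Q$ --- is a routine finite verification from the marks of $\theta$ in the simple systems of Tables~I--II, not a genuine obstacle.
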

\begin{proof}By Table~II, $N\in\mathbb{Z}_{\geq 1}$ in the first case, while
$2N\in\mathbb{Z}_{\geq 1}$ in the second case.  Also, $q\delta-\theta\in\hat{Q}^{+}\setminus Q$ and
\begin{align*}
(k\Lambda_{0}+\hat{\rho},N(q\delta-\theta))-\frac{1}{2}(N(q\delta-\theta),N(q\delta-\theta))
&=N(qk+qh^{\vee}-(\rho,\theta)-\frac{N}{2}(\theta,\theta))=0.
\end{align*}
Hence, the lemma follows from (\ref{eqn1}).
\end{proof}

Express $\theta=\sum_{i=1}^{n}b_i\beta_i$ with $b_{i}\in\mathbb{Z}_{\geq 1}$,
and let $b'=\mbox{max}\{b_1,\ldots,b_n\}$.

\begin{lem}\label{lem: calculation} Suppose that $N,q,r,l\in\mathbb{Z}_{\geq 1}$,
$\alpha\in\{0\}\cup\Delta\setminus\{\theta\}$ and $(N(q\delta-\theta)-r(l\delta-\alpha))\in Q^{+}$. Then
\begin{enumerate}
\item $Nq=rl$, $r\alpha-N\theta\in Q^{+}$, and $\alpha\in\Delta^{+}$;
\item $\sum_{i=1}^{n}\beta_i\leq \alpha$;
\item  $r-N\geq\frac{1}{b'}N>0$.
\end{enumerate}
\end{lem}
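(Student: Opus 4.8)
The plan is to use the direct-sum decomposition $\hat{Q}=Q\oplus\mathbb{Z}\delta$ (immediate from $\alpha_0=\delta-\theta$) to split the hypothesis into its $\delta$-part and its finite part. Expanding,
\[
N(q\delta-\theta)-r(l\delta-\alpha)=(Nq-rl)\delta+(r\alpha-N\theta),
\]
and noting that $r\alpha-N\theta\in Q$ carries no $\delta$-component while $Q^{+}\subseteq Q$, the linear independence of $\delta$ from $Q$ forces $Nq-rl=0$ and leaves $r\alpha-N\theta\in Q^{+}$. This gives the first two assertions of (1) immediately.

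To pin down the sign of $\alpha$, I would chase coefficients in $r\alpha-N\theta=\sum_i(ra_i-Nb_i)\beta_i$, where $\alpha=\sum_i a_i\beta_i$ and $\theta=\sum_i b_i\beta_i$ with each $b_i\geq 1$. If $\alpha=0$, then every coefficient equals $-Nb_i\leq -N<0$, contradicting membership in $Q^{+}$; and if $\alpha\in\Delta^{-}$ (so all $a_i\leq 0$), then $ra_i-Nb_i\leq -N<0$ for every $i$, again a contradiction. Hence $\alpha\in\Delta^{+}$, completing (1). Part (2) is then a short integrality argument: $r\alpha-N\theta\in Q^{+}$ gives $ra_i\geq Nb_i\geq N\geq 1$, so $a_i\geq 1/r>0$, and since the $a_i$ are non-negative integers we get $a_i\geq 1$ for all $i$, i.e. $\sum_i\beta_i\leq\alpha$.

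For (3) I would invoke that $\theta$ is the highest root. Since $\alpha\in\Delta^{+}\setminus\{\theta\}$, we have $\theta-\alpha\in Q^{+}\setminus\{0\}$, so $a_i\leq b_i$ for all $i$ with strict inequality at some index $j$, that is $a_j\leq b_j-1$. Feeding this into $ra_j\geq Nb_j$ yields $Nb_j\leq ra_j\leq r(b_j-1)$, hence $(r-N)b_j\geq r>0$. In particular $r>N$, and using $b_j\leq b'$ we obtain $r-N\geq r/b_j\geq r/b'>N/b'$, which is the desired $r-N\geq\frac{1}{b'}N>0$.

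The sign-chasing and integer estimates are elementary; the two places needing genuine input are the extraction of $Nq=rl$, which relies on $\hat{Q}=Q\oplus\mathbb{Z}\delta$ and the fact that $Q^{+}$ has no $\delta$-component, and, in (3), the highest-root inequality $\theta\geq\alpha$ used to produce an index $j$ with $a_j<b_j$. I expect this last point — knowing that $\theta-\alpha\in Q^{+}$ for every positive root $\alpha$ under the chosen set of simple roots — to be the main thing to justify carefully, the remainder being routine.
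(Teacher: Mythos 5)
Your proof is correct and follows essentially the same route as the paper's: part (1) via the decomposition of $\hat{Q}$ into its $\delta$- and $Q$-components, part (2) by comparing coefficients of $r\alpha-N\theta\in Q^{+}$ in the simple-root basis, and part (3) from the highest-root property $0<\alpha<\theta$ together with an elementary estimate. Your rearrangement of the last step (deriving $(r-N)b_j\ge r$ and hence $r-N\ge r/b'$) even gives a marginally stronger bound than the paper's $N\le(r-N)b'$, but the substance is identical.
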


\begin{proof}
Statement (1) follows immediately.  For (2), express
$\alpha=\sum_{i=1}^{n}a_{i}\beta_i$ with $a_{i}\in\mathbb{Z}_{\geq 0}$. Now $r\alpha-N\theta\in Q^{+}$ implies
that $0\leq ra_i-Nb_i$ for $i=1,\ldots,n$.  Hence, $a_i\geq 1$ for $i=1,\ldots,n$.
For (3), since $0< \alpha < \theta$, we have that $a_i\leq b_i$ for $i=1,\ldots,n$ and there is an index $j$
such that $a_j \leq b_j-1$.  Thus,
$$N\leq r a_j - N(b_j-1) \leq (r-N)a_j \leq (r-N)b'.$$
\end{proof}

\subsection{Proof of the main theorem}
\begin{thm}
Let $\mathfrak{g}$ be an (almost) simple finite dimensional Lie superalgebra with defect greater than or equal to two.
If $k\in\mathbb{Q}$, then the vacuum module $V^{k}$ over $\hat{\mathfrak{g}}$ is not simple.
\end{thm}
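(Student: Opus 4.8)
The plan is to reduce the statement to the non-vanishing of a single coefficient $a_{m,\xi}$ and then to produce an explicit such coefficient. By Lemma~\ref{lem: not simple if a is non-zero}, $V^{k}$ fails to be simple as soon as we exhibit one pair $(m,\xi)\in C(k\Lambda_{0})$ with $a_{m,\xi}\ne 0$. Since $V^{k}$ is automatically reducible at the critical level $k=-h^{\vee}$, we may assume $k+h^{\vee}\ne 0$ and split the argument into the two cases $k+h^{\vee}>0$ and $k+h^{\vee}<0$. Because $\operatorname{def}\mathfrak g\ge 2$, the root-system data of Section~2 allow us to choose, in each case, a set of simple roots $\Pi$ that contains a maximal isotropic set $S$ and whose highest root $\theta$ is \emph{even} with $(\theta,\theta)$ a nonzero integer of the required sign: $(\theta,\theta)>0$ when $k+h^{\vee}>0$ and $(\theta,\theta)<0$ when $k+h^{\vee}<0$. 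In either case $\frac{k+h^{\vee}}{(\theta,\theta)}>0$, which is exactly the hypothesis of Lemma~\ref{lem: N}.

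Next I would invoke Lemma~\ref{lem: N}. Choosing $q$ as in that lemma and setting $N$ by (\ref{eqn17}), it produces an explicit element of $C(k\Lambda_{0})$ whose relevant product is $m\xi=N(q\delta-\theta)$; this is the common value of $N(q\delta-\theta)$ and $2N(\tfrac{q}{2}\delta-\tfrac{\theta}{2})$ in the two sub-cases $\tfrac{\theta}{2}\notin\Delta$ and $\tfrac{\theta}{2}\in\Delta$. It therefore remains to show that $a_{m,\xi}\ne 0$ for this $m\xi$, and by (\ref{eqn3}) the quantity $a_{m,\xi}$ depends only on the product $m\xi$, so the computation is identical in both sub-cases.

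To evaluate $a_{m,\xi}$ I would first cut down the sum in (\ref{eqn3}). Only the real affine roots $\gamma=l\delta-\alpha$ with $\alpha\in\Delta$, $l\ge 1$, contribute, since an imaginary $\gamma=l\delta$ forces the argument of $k_{_{\Pi}}$ outside $Q^{+}$. For such $\gamma$ the condition $k_{_{\Pi}}(m\xi-r\gamma)\ne 0$ requires $m\xi-r\gamma\in Q^{+}$, and Lemma~\ref{lem: calculation} then forces $Nq=rl$, $\alpha\in\Delta^{+}$ with full support $\alpha\ge\sum_{i}\beta_{i}$, and $r>N$; in particular only finitely many triples $(\gamma,r,\alpha)$ survive. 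Among these, the terms with $m\xi-r\gamma=0$ are distinguished, since $k_{_{\Pi}}(0)=1$: they are exactly $\gamma=q\delta-\theta,\ r=N$ (always present, contributing $+1$ because $\theta$ is even), together with $\gamma=\tfrac{q}{2}\delta-\tfrac{\theta}{2},\ r=2N$ when $\tfrac{\theta}{2}\in\Delta$ (contributing $-1$, since $p(\tfrac{\theta}{2})=1$). Thus when $\tfrac{\theta}{2}\notin\Delta$ the constant term already gives $a_{m,\xi}=1+(\text{terms with }m\xi-r\gamma\in Q^{+}\setminus\{0\})$, whereas when $\tfrac{\theta}{2}\in\Delta$ the two constant contributions cancel and one must descend to the next layer.

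The main obstacle is precisely this last point: controlling the remaining terms $k_{_{\Pi}}(\eta)$ with $\eta=r\alpha-N\theta\in Q^{+}\setminus\{0\}$ and showing that the full signed sum does not vanish. My plan is to expand each surviving $k_{_{\Pi}}(\eta)$ through the Weyl-denominator formula of Lemma~\ref{lem: R-Formula} (equivalently, to use Corollary~\ref{cor3} to pin down the admissible pairs $(w,\mu)$), and to exploit two further constraints to rule out cancellation: first, the freedom to take $q$ arbitrarily large in Lemma~\ref{lem: N}, which bounds $l=Nq/r<q$ and thereby restricts the admissible $\gamma$; second, the fact that the left-hand side of (\ref{eqn2}) is a sum of characters of honest submodules and hence has non-negative coefficients, which fixes the signs of the extremal contributions. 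Carrying this out requires the explicit root systems of Section~2 and a separate case-by-case bookkeeping for each family $A(m|n)$, $B(m|n)$, $B(n|m)$, $D(m|n)$, $D(n|m)$; I expect the families with $\tfrac{\theta}{2}\in\Delta$, where the constant term cancels, to be the delicate ones.
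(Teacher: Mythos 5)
Your reduction (exhibit one $(m,\xi)\in C(k\Lambda_0)$ with $a_{m,\xi}\ne 0$, take $m\xi=N(q\delta-\theta)$ from Lemma~\ref{lem: N}, and isolate the $k_{_{\Pi}}(0)$ term in (\ref{eqn3})) matches the paper's strategy, but there are two genuine gaps. First, your treatment of the case $\tfrac{\theta}{2}\in\Delta$ is wrong in a way that derails the rest of the plan: you claim the pair $\gamma=q\delta-\theta$, $r=N$ is ``always present'' and cancels against $\gamma=\tfrac{q}{2}\delta-\tfrac{\theta}{2}$, $r=2N$, forcing a descent to a ``next layer.'' In fact Lemma~\ref{lem: N} arranges $2N$ to be an \emph{odd} integer in that case, so $N\notin\mathbb{Z}$ and the pair $(N,q\delta-\theta)$ simply does not occur in the sum; moreover the surviving term carries sign $(-1)^{(2N+1)p(\xi)}=+1$ because $2N+1$ is even. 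There is no cancellation and no next layer --- this parity trick is the whole point of the second clause of Lemma~\ref{lem: N}, and without it your case analysis of the constant terms is incorrect.

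Second, for the terms with $m\xi-r\gamma=r\alpha-N\theta\in Q^{+}\setminus\{0\}$ you propose to control a \emph{signed sum} of possibly nonzero values of $k_{_{\Pi}}$ via the denominator expansion and a positivity argument; the paper instead proves that every such term \emph{vanishes}. The mechanism is Lemma~\ref{lem: equation}: $k_{_{\Pi}}(r\alpha-N\theta)\ne 0$ forces the Casimir constraint $2(\rho,r\alpha-N\theta)=(r\alpha-N\theta,r\alpha-N\theta)$, a quadratic in $r$ whose discriminant is negative for $N\gg 0$ when $(\alpha,\alpha)\ne 0$ (Lemma~\ref{lem: inequality}), and whose solution is non-integral or contradicts $r-N\geq N/b'$ when $(\alpha,\alpha)=0$ (Lemmas~\ref{lem: epsilon} and~\ref{lem: calculation}). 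Your sketch contains no concrete substitute for this step, and the positivity of $\sum_i\operatorname{ch}\mathcal{F}^i$ only constrains the extremal coefficients, not the one you need. Finally, the case $\alpha\in\{\theta,\tfrac{\theta}{2}\}$ genuinely fails for $\mathfrak{g}=D(n+2|n)$ with $\tfrac{1}{k+h^{\vee}}\in\mathbb{Z}_{\geq 1}$ (Lemma~\ref{lem: theta} has this one exception), and the paper must rerun the whole argument there with a different $\Pi$ in which $(\theta,\theta)=0$; your proposal does not anticipate this exceptional case at all.
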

\begin{proof}
Let $\mathfrak{g}$ be an (almost) simple finite dimensional Lie superalgebra with defect greater than or equal to two.
Fix $k\in\mathbb{Q}$. Now $k\in\mathbb{Q}$ if and only if $k+h^{\vee}\in\mathbb{Q}$, since $h^{\vee}\in\frac{1}{2}\mathbb{Z}_{\geq 0}$.
If $k=-h^{\vee}$ then  $V^{k}$ is not simple, so we assume now that
$k+h^{\vee}\in\mathbb{Q}\backslash\left\{ 0\right\}$.  If $\mathfrak{g}=D(n+2|n)$ then we assume that
$\frac{1}{k+h^{\vee}}\not\in\mathbb{Z}_{\geq 1}$.  We will handle this case separately.
Let $\Pi$ be the set of simple roots listed in Table~I corresponding to $\mathfrak{g}$ and $k+h^{\vee}$.
We have chosen $\Pi$ so that the highest weight $\theta$ satisfies the conditions:
$(\theta,\theta)\neq 0$ and $\frac{k+h^{\vee}}{\left(\theta,\theta\right)}>0$, (see Table~II).

By Lemma~\ref{lem: not simple if a is non-zero}, it suffices to show that there exists
$(m,\xi)\in C\left(k\Lambda_{0}\right)$ such that $a_{m,\xi}\neq 0$ in
(\ref{eqn2}).  By (\ref{eqn3}), it suffices to find
$(m,\xi)\in C\left(k\Lambda_{0}\right)$ with $\xi\in\hat{\Delta}^{+}$
such that for all $(r,\gamma)\in\mathbb{Z}_{\geq 1}\times(\hat{\Delta}^{+}\setminus\Delta)$,
we have that $(m,\xi)$ satisfies the conditions:
\begin{enumerate}
\item\label{cond1} if $(r,\gamma)\neq(m,\xi)$, then $r\gamma\neq m\xi$,\\
\item\label{cond2} if $r\gamma\neq m\xi$, then $k_{_{\Pi}}(m\xi-r\gamma)=0$.
\end{enumerate}
Indeed, in this case
\begin{equation*} a_{m,\xi}=(-1)^{(m+1)p(\xi)}\dim\mathfrak{g}_{\xi},\end{equation*}
which is non-zero.

Choose $q\in 2\mathbb{Z}_{\geq 1}$ such that $q\left(\frac{k+h^{\vee}}{(\theta,\theta)}\right)\in\mathbb{Z}_{\geq 1}$  and
$q>\frac{(\rho,\theta)}{k+h^{\vee}}$. Define $N$ as in (\ref{eqn17}).
Note that for each $n\in\mathbb{Z}$ it is possible to choose $q$ sufficiently large such that $N>n$.
So we may assume that $N>>0$.

By Table~II, if $\frac{\theta}{2}\not\in\Delta$ then
$\frac{2\left(\rho,\theta\right)}{\left(\theta,\theta\right)}\in\mathbb{Z}$.
Then by Lemma~\ref{lem: N}, $(N,q\delta-\theta)\in C(k\Lambda_{0})$.  Since $c \theta\not\in\Delta^{+}$ for $c\neq 1$,
we have that $(N,q\delta-\theta)$ satisfies condition~(\ref{cond1}). If $\frac{\theta}{2}\in\Delta$,
then $\frac{2\left(\rho,\theta\right)}{\left(\theta,\theta\right)}\in\frac{1}{2}+\mathbb{Z}$ (see Table~II).
Then by Lemma~\ref{lem: N}, $(2N,\frac{q}{2}\delta-\frac{1}{2}\theta)\in C(k\Lambda_{0})$ and $2N$ is an odd integer.
Since $c \frac{\theta}{2}\not\in\Delta^{+}$ for $c\not\in\{1,2\}$ and $2N$ is odd,we have that
$(2N,\frac{q}{2}\delta-\frac{1}{2}\theta)$ satisfies condition~(\ref{cond1}).

Suppose that $k_{_{\Pi}}\left(N\left(q\delta-\theta\right)-r\gamma\right)\ne0$ for some
$(r,\gamma)\in\mathbb{Z}_{\geq 1}\times(\hat{\Delta}^{+}\setminus\Delta)$ such that $(r,\gamma)\neq (N,q\delta-\theta)$.
Write $\gamma=l\delta-\alpha$ for some $l\in\mathbb{Z}_{\ge1}$ and $\alpha\in\Delta\cup\{0\}$.\\ \\
\textbf{Case 1:}
Suppose $\alpha\ne\theta,\frac{\theta}{2}$.  By Lemma~\ref{lem: calculation}, we have
$\sum_{\alpha_{i}\in\Pi}\alpha_{i}\le\alpha$ and $r-N\geq\frac{1}{b'}N>0$. Hence, we may assume that $r-N>>0$.
Also, $Nq=rl$, which implies $k_{_{\Pi}}\left(r\alpha-N\theta\right)\ne0$.
Thus by Lemma~\ref{lem: equation},
$$2\left(\rho,r\alpha-N\theta\right)=\left(r\alpha-N\theta,r\alpha-N\theta\right),$$
implying \begin{equation}
\left(\alpha,\alpha\right)r^{2}+\left(-2\left(\rho,\alpha\right)-2N\left(\alpha,\theta\right)\right)r+
N^{2}\left(\theta,\theta\right)+2N\left(\rho,\theta\right)=0.\label{eq:quadratic r}\end{equation}\\
\textbf{Subcase 1:}
If $\left(\alpha,\alpha\right)\ne0$, the discriminant $D$ for this quadratic equation in the variable $r$
is \begin{eqnarray*}
D & = & \left(2\left(\rho,\alpha\right)+2N\left(\alpha,\theta\right)\right)^{2}-2\left(\alpha,\alpha\right)\left(N^{2}\left(\theta,\theta\right)+2N\left(\rho,\theta\right)\right)\\
 & = & 4N^{2}\left(\left(\alpha,\theta\right)\left(\alpha,\theta\right)-\left(\alpha,\alpha\right)\left(\theta,\theta\right)\right)\\
& &+8N\left(\left(\rho,\alpha\right)\left(\alpha,\theta\right)-\left(\alpha,\alpha\right)\left(\rho,\theta\right)\right)+4\left(\rho,\alpha\right)^{2}.\end{eqnarray*}
By Lemma~\ref{lem: inequality}, $$\left(\alpha,\alpha\right)\left(\theta,\theta\right)>
\left(\alpha,\theta\right)\left(\alpha,\theta\right),$$
which implies that $D<0$ for $N>>0$.  This contradicts the assumption that $r$ is an integer.\\ \\
\textbf{Subcase 2:}
If $\left(\alpha,\alpha\right)=0$, then by solving (\ref{eq:quadratic r}) we obtain \begin{equation}\label{eqn5}
r=\frac{N^{2}\left(\theta,\theta\right)+2N\left(\rho,\theta\right)}{2N\left(\theta,\alpha\right)+
2\left(\rho,\alpha\right)}.\end{equation}
Note that the denominator is non-zero for $N$ sufficiently large.
Indeed, by Lemma~\ref{lem: epsilon}, $2\left(\theta,\alpha\right)=\left(\theta,\theta\right)$.
By substituting this into (\ref{eqn5}), we obtain
\begin{eqnarray*}
r & = & \frac{N^{2}\left(\theta,\theta\right)+2N\left(\rho,\theta\right)}{N\left(\theta,\theta\right)+2\left(\rho,\alpha\right)}
\ =\ N+\frac{2\left(\left(\rho,\theta\right)-\left(\rho,\alpha\right)\right)}{\left(\theta,\theta\right)+
\left(\frac{2\left(\rho,\alpha\right)}{N}\right)}.\end{eqnarray*}
Since $r>N$ we have that $\left(\rho,\theta\right)\ne\left(\rho,\alpha\right)$.
If $\left(\rho,\alpha\right)\ne0$, then $r\notin\mathbb{Z}$
for $N>>0$.  If $\left(\rho,\alpha\right)=0$, then $$r=N+\frac{2\left(\rho,\theta\right)}{\left(\theta,\theta\right)}$$
but by Lemma~\ref{lem: calculation}, $r-N>\frac{2\left(\rho,\theta\right)}{\left(\theta,\theta\right)}$ for $N>>0$,
which is a contradiction.\\ \\
\textbf{Case 2:}
Suppose $\alpha=c\theta$. Then $k_{_{\Pi}}((rc-N)\theta)\neq 0$ and $rc>N$.
By Lemma~\ref{lem: equation},
$$2(\rho,(rc-N)\theta)=((rc-N)\theta,(rc-N)\theta),$$
which implies that \begin{equation}\label{eqn13}rc-N=\frac{2(\rho,\theta)}{(\theta,\theta)}. \end{equation}
Hence, $\frac{2(\rho,\theta)}{(\theta,\theta)}>0$. Then $(\theta,\theta)=2$ by Table~II. Now
$(\rho,\theta)\neq 0$ and $k_{_{\Pi}}(\frac{2(\rho,\theta)}{(\theta,\theta)}\theta)\neq 0$,
so it follows from Lemma~\ref{lem: theta} that  $\mathfrak{g}= D(n+2|n)$.

If $\mathfrak{g}=D(n+2|n)$, then $\theta=\varepsilon_1+\varepsilon_2$ and $\alpha=c\theta\in\Delta^{+}$ implies that $c=1$.
Then by Table~II and (\ref{eqn13}) we have $r=N+1$.  Now
$N(q\delta-\theta)-r(l\delta-\theta)\in Q^{+}$ implies that $Nq=rl$.  After substituting $r=N+1$ we have
\begin{equation}\label{eqn14}Nq=(N+1)l.\end{equation}
Since $N$ and $N+1$ are relatively prime, $N+1$ divides $q$.
Hence, there exists $d\in\mathbb{Z}_{\geq 1}$ such that \begin{equation}\label{eqn16}q=(N+1)d.\end{equation}

By substituting the values given in Table~II into (\ref{eqn17}), we have
\begin{equation}\label{eqn15}N=q(k+h^{\vee})-1.\end{equation}
Combining (\ref{eqn16}) and (\ref{eqn15}) we obtain $$d=\frac{1}{k+h^{\vee}},$$
where $d\in\mathbb{Z}_{\geq 1}$.
But we assumed that if $\mathfrak{g}=D(n+2|n)$, then $\frac{1}{k+h^{\vee}}\not\in\mathbb{Z}_{\geq 1}$.\\

\noindent\textbf{Case $\mathfrak{g}=D(n+2|n)$ and $d:=\frac{1}{k+h^{\vee}}\in\mathbb{Z}_{\geq 1}$.}
Choose a maximal isotropic set
$$S=\left\{ \varepsilon_{i}-\delta_{i}\mid1\le i\le n\right\}$$
and a set of simple roots
$$\left\{\varepsilon_1-\delta_1,\delta_1-\varepsilon_2,\ldots,
\delta_n-\varepsilon_{n+1},\varepsilon_{n+1}-\varepsilon_{n+2},\varepsilon_{n+1}+\varepsilon_{n+2}\right\}$$
which contains $S$.  Then $\theta=\varepsilon_{1}+\delta_{1}$, $\left(\theta,\theta\right)=0$ and
$\left(\theta,\rho\right)=2$.

We will show that $a_{N,q\delta-\theta}\ne0$, where $q:=2d$ and $N\in\mathbb{Z}_{\geq 1}$ with $N>>0$. It
will then follow from Lemma~\ref{lem: not simple if a is non-zero} that $V^{k}$ is not simple.
First, note that $\left(N,q\delta-\theta\right)\in C\left(k\Lambda_{0}\right)$ for any $N$,
since by the definition of $q$ we have
$$
\left(k\Lambda_{0}+\hat{\rho},N(q\delta-\theta)\right)-\frac{1}{2}\left(N(q\delta-\theta),N(q\delta-\theta)\right)=
N(q\left(k+h^{\vee}\right)-\left(\rho,\theta\right))=0.$$
Now $q\delta-\theta\in\hat{\Delta}^{+}\setminus\Delta$.  If
$(r,\gamma)\in\mathbb{Z}_{\geq 1}\times(\hat{\Delta}^{+}\setminus\Delta)$ such that
$r\gamma=N(q\delta-\theta)$, then $(r,\gamma)=(N,q\delta-\theta)$.

Suppose that $k_{_{\Pi}}\left(N\left(q\delta-\theta\right)-r\gamma\right)\neq 0$ for some
$(r,\gamma)\in\mathbb{Z}_{\geq 1}\times(\hat{\Delta}^{+}\setminus\Delta)$ such that $(r,\gamma)\neq (N,q\delta-\theta)$.
Write $\gamma=l\delta-\alpha$ for some $l\in\mathbb{Z}_{\ge1}$ and $\alpha\in\Delta\cup\{0\}$.

If $\alpha=\theta$, then $k_{_{\Pi}}((r-N)\theta)\neq 0$ and $r>N$.  By Lemma~\ref{lem: equation},
$$(\rho,(r-N)\theta)=((r-N)\theta,(r-N)\theta).$$  But $(\theta,\theta)=0$ and $(\rho,\theta)=2$, so this
is a contradiction.

Now assume that $\alpha\neq\theta$.  Then by Lemma~\ref{lem: calculation},
$Nq=rl$, $r>N$,  $k_{_{\Pi}}\left(r\alpha-N\theta\right)\neq 0$, and
$$\alpha\in\{\beta\in\Delta\mid\sum_{\alpha_i\in\Pi}\alpha_i \leq \beta<\theta\}=
\left\{ \varepsilon_{1}+\varepsilon_{i},\varepsilon_{1}+\delta_{j}\mid 2\le i\le n+1,\ 2\le j\le n\right\},$$
and for all $\alpha\in A_{\Pi}$ we have that $\left(\alpha,\theta\right)=1$, $\left(\alpha,\rho\right)=2$ and
$(\alpha,\alpha)\in\{0,2\}$.

By Lemma~\ref{lem: equation},
$$\left(\rho,r\alpha-N\theta\right)=\frac{1}{2}\left(r\alpha-N\theta,r\alpha-N\theta\right)$$
implying
$$r^{2}\frac{\left(\alpha,\alpha\right)}{2}-rN\left(\alpha,\theta\right)
-r\left(\alpha,\rho\right)+N\left(\theta,\rho\right)+N^{2}\frac{(\theta,\theta)}{2}=0.$$
After substituting we have
$$\frac{(\alpha,\alpha)}{2}r^{2}-(N+2)r+2N=0.$$
If $(\alpha,\alpha)=0$ then $r=\frac{2N}{N+2}\not\in\mathbb{Z}$ for $N>>0$, which is a contradiction.
If $(\alpha,\alpha)=2$, then $r\in\{2,N\}$.  But we have that $r>N$ and $N>>0$, so this is also a contradiction.
\end{proof}

\section{Tables and computations}
\textbf{I.} This table records our choice of simple roots $\Pi$ for each of our cases.
The defect is $n$, so we assume $n\geq 2$.  When $k+h^{\vee}>0$ we write ``$+$'',
and when $k+h^{\vee}<0$ we write ``$-$''.
\begin{equation*}\label{table1}\doublespacing
\begin{tabular}{|c|c||c|}
\hline
$\mathfrak{g}$& $k+h^{\vee}$ & $\Pi$\\
\hline \hline
$\begin{array}{c} A(m-1|n-1)\\ m\geq n\end{array}$  & $+$ &  $\begin{array}{l}
\left\{ \varepsilon_{1}-\delta_{1},\delta_{1}-\delta_{2},\delta_{2}-\varepsilon_{2},\varepsilon_{2}-\delta_{3},
\delta_{3}-\varepsilon_{3},\ldots,\delta_{n}-\varepsilon_{n},\right.\\
 \left.\hspace{1cm}\varepsilon_{n}-\varepsilon_{n+1},\ldots,\varepsilon_{m-1}-\varepsilon_{m}\right\}\end{array}$ \\
\hline
$\begin{array}{c} A(m-1|n-1)\\ m\geq n\end{array}$  & $-$ & $\begin{array}{l} \left\{ \delta_{1}-\varepsilon_{1},
\varepsilon_{1}-\varepsilon_{2},\ldots,\varepsilon_{m-n+1}-\varepsilon_{m-n+2},\varepsilon_{m-n+2}-\delta_{2},\right.\\
\left.\hspace{1cm}\delta_{2}-\varepsilon_{m-n+3},\varepsilon_{m-n+3}-\delta_{3},
\ldots,\varepsilon_{m}-\delta_{n}\right\} \end{array}$\\
\hline
$\begin{array}{c}B\left(m|n\right)\\ m=n+1\end{array}$ & $+$ &
$\left\{ \varepsilon_{1}-\varepsilon_{2},\varepsilon_{2}-\delta_{1},\ldots,\varepsilon_{n+1}-\delta_{n},
\delta_{n}\right\}$\\
\hline
$\begin{array}{c}B\left(m|n\right)\\ m=n+2\end{array}$  & $+$ &
$\left\{\varepsilon_{1}-\varepsilon_{2},\varepsilon_{2}-\delta_{1},\ldots,\varepsilon_{n+1}-\delta_{n},
\delta_{n}-\varepsilon_{n+2},\varepsilon_{n+2}\right\}$\\
\hline
$\begin{array}{c}B\left(m|n\right)\\ m\ge n+3\end{array}$  & $+$ &
$\begin{array}{l}\left\{ \varepsilon_{1}-\varepsilon_{2},\varepsilon_{2}-\delta_{1},\ldots,\varepsilon_{n+1}-\delta_{n},
\delta_{n}-\varepsilon_{n+2},\right. \\ \left.\hspace{1cm}\varepsilon_{n+2}-\varepsilon_{n+3},\ldots,\varepsilon_{m-1}-\varepsilon_{m},
\varepsilon_{m}\right\}\end{array}$\\
\hline
$\begin{array}{c}B\left(m|n\right)\\ m\geq n+1\end{array}$  & $-$ & $\begin{array}{l}\left\{ \delta_{1}-\delta_{2},\ldots,
\delta_{n-1}-\delta_{n},\delta_{n}-\varepsilon_{1},\right.\\ \left.\hspace{1cm}\varepsilon_{1}-\varepsilon_{2},\ldots,
\varepsilon_{m-1}-\varepsilon_{m},\varepsilon_{m}\right\}\end{array}$\\
\hline
$\begin{array}{c}B(n|m)\\ m=n\end{array}$  & $+$ & $\left\{ \varepsilon_{1}-\delta_{1},\delta_{1}-\varepsilon_{2},
\ldots,\varepsilon_{n}-\delta_{n},\delta_{n}\right\}$\\
\hline
$\begin{array}{c}B(n|m)\\ m=n+1\end{array}$ & $+$ & $\left\{ \varepsilon_{1}-\delta_{1},
\delta_{1}-\varepsilon_{2},\ldots,\varepsilon_{n}-\delta_{n},\delta_{n}-\varepsilon_{n+1},\varepsilon_{n+1}\right\}$\\
\hline
$\begin{array}{c}B(n|m)\\ m\geq n+2\end{array}$  & $+$ & $\begin{array}{l}\left\{ \varepsilon_{1}-\delta_{1},
\delta_{1}-\varepsilon_{2},\ldots,\varepsilon_{n}-\delta_{n},\delta_{n}-\varepsilon_{n+1},\right.\\ \left.\hspace{1cm}
\varepsilon_{n+1}-\varepsilon_{n+2},\ldots,\varepsilon_{m-1}-\varepsilon_{m},\varepsilon_{m}\right\}\end{array}$\\
\hline
$\begin{array}{c}B(n|m)\\ m\geq n\end{array}$  & $-$ & $\begin{array}{l}\left\{ \delta_{1}-\delta_{2},\delta_{2}-\delta_{3},
\ldots,\delta_{n}-\varepsilon_{1},\right.\\ \left.\hspace{1cm}\varepsilon_{1}-\varepsilon_{2},\ldots,\varepsilon_{m-1}-\varepsilon_{m},
\varepsilon_{m}\right\}\end{array}$\\
\hline
$\begin{array}{c}D(m|n)\\ m=n+1\end{array}$  & $+$ & $\begin{array}{l}\left\{\varepsilon_{1}-\varepsilon_{2},
\varepsilon_{2}-\delta_{1},\delta_{1}-\delta_{2},\delta_{2}-\varepsilon_{3},\varepsilon_{3}-\delta_{3},\right.\\
\left.\hspace{1cm}\delta_{3}-\varepsilon_{4}\,\ldots,\varepsilon_{n}-\delta_{n},\delta_{n}-\varepsilon_{n+1},
\delta_{n}+\varepsilon_{n+1}\right\}\end{array}$\\
\hline
$\begin{array}{c}D(m|n)\\ m= n+2\end{array}$ & $\begin{array}{c}+\\ \frac{1}{k+h^{\vee}}\not\in\mathbb{Z}\end{array}$
& $\left\{\varepsilon_{1}-\varepsilon_{2},\varepsilon_{2}-\delta_{1},
\delta_{1}-\varepsilon_{3},\ldots,\delta_{n}-\varepsilon_{n+2},\delta_{n}+\varepsilon_{n+2}\right\}$\\
\hline
\end{tabular}
\end{equation*}

\noindent Table I continued.
\begin{equation*}\doublespacing
\begin{tabular}{|c|c||c|}
\hline
$\mathfrak{g}$& $k+h^{\vee}$ & $\Pi$\\
\hline \hline
$\begin{array}{c}D(m|n)\\ m\geq n+3\end{array}$ & $+$ & $\begin{array}{l}\left\{\varepsilon_{1}-\varepsilon_{2},
\varepsilon_{2}-\delta_{1},\delta_{1}-\varepsilon_{3},\ldots,\delta_{n}-\varepsilon_{n+2},\right.\\
\left.\hspace{1cm}\varepsilon_{n+2}-\varepsilon_{n+3},\ldots\varepsilon_{m-1}-\varepsilon_{m},\varepsilon_{m-1}+\varepsilon_{m}
\right\}\end{array}$\\
\hline
$\begin{array}{c}D(m|n)\\ m\geq n+1\end{array}$  & $-$ & $\begin{array}{l}\left\{\delta_{1}-\varepsilon_{1},
\varepsilon_{1}-\delta_{2},\delta_{2}-\varepsilon_{2},\varepsilon_{2}-\delta_{3},\ldots,
\delta_{n}-\varepsilon_{n},\right.\\ \left.\hspace{1cm}\varepsilon_{n}-\varepsilon_{n+1},\ldots\varepsilon_{m-1}-\varepsilon_{m},
\varepsilon_{m-1}+\varepsilon_{m}\right\}\end{array}$\\
\hline
$\begin{array}{c}D(n|m)\\ m=n\end{array}$  & $+$  & $\left\{ \varepsilon_{1}-\delta_{1},\delta_{1}-\varepsilon_{2},
\varepsilon_{2}-\delta_{2},\ldots,\varepsilon_{n}-\delta_{n},\varepsilon_{n}+\delta_{n}\right\}$\\
\hline
$\begin{array}{c}D(n|m)\\ m\geq n+1\end{array}$  & $+$  & $\begin{array}{l}\left\{ \varepsilon_{1}-\delta_{1},\delta_{1}-\varepsilon_{2},
\varepsilon_{2}-\delta_{2},\ldots,\varepsilon_{n}-\delta_{n},\delta_{n}-\varepsilon_{n+1,}\right.\\
 \left.\hspace{1cm}\varepsilon_{n+1}-\varepsilon_{n+2}\ldots,\varepsilon_{m-1}-\varepsilon_{m},
 2\varepsilon_{m}\right\}\end{array}$\\
\hline
$\begin{array}{c}D(n|m)\\ m\geq n\end{array}$  & $-$  & $\begin{array}{l}\left\{ \delta_{1}-\delta_{2},\,\delta_{2}-\delta_{3},\ldots,
\delta_{n}-\varepsilon_{1},\right.\\ \left.\hspace{1cm}\varepsilon_{1}-\varepsilon_{2},\ldots,
\varepsilon_{m-1}-\varepsilon_{m},2\varepsilon_{m}\right\}\end{array}$\\
\hline
\end{tabular}
\end{equation*}\ \\

\noindent\textbf{II.} This table records properties of $\Pi$.
We indicate when $\frac{\theta}{2}$ is a root.
\begin{equation*}\doublespacing
\begin{tabular}{|cc|c||c|c|c|c|c|c|c|}
\hline
$\mathfrak{g}$ & & $k+h^{\vee}$ & $h^{\vee}$ & $\theta$ & $\left(\theta,\theta\right)$
& $\frac{2(\rho,\theta)}{(\theta,\theta)}$ & $\frac{\theta}{2}$ \\
\hline \hline
$A(m-1|n-1)$, & $m\geq n$& $+$ & $m-n$ & $\varepsilon_1-\varepsilon_m$ & 2&
$m-n-1$ &  \\
\hline
$A(m-1|n-1)$,& $m\geq n$  & $-$ & $m-n$ & $\delta_1-\delta_n$ & -2 & $-m+n-1$ &  \\
\hline
$B\left(m|n\right)$,& $m\geq n+1$ & $+$ & $2(m-n)-1$ & $\varepsilon_{1}+\varepsilon_{2}$ & 2 &
 $2m-2n-2$ &\\
\hline
$B\left(m|n\right)$, & $m\geq n+1$ & $-$ & $2(m-n)-1$ & $2\delta_{1}$ & -4
& $-m+n-\frac{1}{2}$ & $\delta_1$\\
\hline
$B(n|m)$, & $m\geq n$  & $+$ & $m-n+\frac{1}{2}$ & $2\varepsilon_{1}$ & 2 &
$m-n-\frac{1}{2}$  & $\varepsilon_1$\\
\hline
$B(n|m)$, & $m\geq n$ & $-$ & $m-n+\frac{1}{2}$ & $\delta_{1}+\delta_{2}$ & -1 &
$-2m+2n-2$ & \\
\hline
$D(m|n)$, & $m\geq n+1$ & $+$ & $2(m-n-1)$ & $\varepsilon_{1}+\varepsilon_{2}$ & 2 &
 $2m-2n-3$ & \\
\hline
$D(m|n)$, & $m\geq n+1$ & $-$ & $2(m-n-1)$ & $2\delta_{1}$ & -4 & $-m+n$ &\\
\hline
$D(n|m)$, & $m\geq n$  & $+$ & $m-n+1$ & $2\varepsilon_{1}$ & 2  & $m-n$ & \\
\hline
$D(n|m)$,& $m\geq n$ & $-$ & $m-n+1$ & $\delta_{1}+\delta_{2}$ & -1
& $-2m+2n-3$ & \\
\hline
\end{tabular}
\end{equation*}

\noindent\textbf{III.}
Let $A_{\Pi}=\{\alpha\in\Delta\mid\sum_{\alpha_i\in\Pi}\alpha_i \leq \alpha<\theta\}$.
\begin{equation*}\doublespacing
\begin{tabular}{|c|c||c|c|}
\hline $\mathfrak{g}$  & $k+h^{\vee}$ & $\theta$ & $A_{\Pi}$ \\
\hline \hline
$\begin{array}{c}A(m-1|n-1)\\ m\geq n\end{array}$& $+$ & $\varepsilon_1-\varepsilon_m$ & $\emptyset$\\
\hline
$\begin{array}{c}A(m-1|n-1)\\ m\geq n\end{array}$& $-$ & $\delta_1-\delta_m$ & $\emptyset$\\
\hline
$\begin{array}{c}B\left(m|n\right)\\ m\geq n+1\end{array}$ & $+$ & $\varepsilon_1+\varepsilon_2$
& $\left\{ \varepsilon_{1},\varepsilon_{1}+\varepsilon_{i},
\varepsilon_{1}+\delta_{j}\right\}_{i=3,\ldots, m,\ j=1, \ldots, n} $\\
\hline
$\begin{array}{c}B\left(m|n\right)\\ m\geq n+1\end{array}$& $-$ & $2\delta_1$
& $\left\{ \delta_{1},\delta_{1}+\varepsilon_{i},
\delta_{1}+\delta_{j}\right\}_{i=1,\ldots, m-1,\ j=2,\dots, n} $\\
\hline
$\begin{array}{c}B(n|m)\\ m\geq n\end{array}$  & $+$ & $2\varepsilon_1$
&  $\left\{ \varepsilon_{1},
\varepsilon_{1}+\varepsilon_{i},\varepsilon_{1}+\delta_{j}\right\}_{i=2,\ldots, n,\ j=1,\ldots, n} $\\
\hline
$\begin{array}{c}B(n|m)\\ m\geq n\end{array}$ & $-$ & $\delta_1+\delta_2$
& $\left\{ \delta_{1}+\varepsilon_{i},
\delta_{1}+\delta_{j}\right\}_{i=1,\ldots, m-1,\ j=3,\ldots, n} $ \\
\hline
$\begin{array}{c}D(m|n)\\ m\geq n+1\end{array}$ & $+$ & $\varepsilon_1+\varepsilon_2$
& $\left\{\left(\varepsilon_{1}+\varepsilon_{i}\right),
\left(\varepsilon_{1}+\delta_{j}\right)\right\}_{i=3,\ldots, m-1,\ j=1,\ldots, n}$\\
\hline
$\begin{array}{c}D(m|n)\\ m\geq n+1\end{array}$ & $-$ & $2\delta_1$
&  $\left\{
\left(\delta_{1}+\varepsilon_{i}\right),\left(\delta_{1}+\delta_{j}\right)\right\}_{i=1,\ldots, m-1,\ j=2,\ldots, n}$\\
\hline
$\begin{array}{c}D(n|m)\\ m\geq n\end{array}$ & $+$ & $2\varepsilon_1$
& $\left\{ \left(\varepsilon_{1}+\varepsilon_{i}\right),
\left(\varepsilon_{1}+\delta_{j}\right)\right\}_{i=2,\ldots, m,\ j=1,\ldots, n}$\\
\hline
$\begin{array}{c}D(n|m)\\ m\geq n\end{array}$  & $-$ & $\delta_1+\delta_2$
&  $\left\{ \left(\delta_{1}+\varepsilon_{i}\right),
\left(\delta_{1}+\delta_{j}\right)\right\}_{i=1,\ldots, m,\ j=3,\ldots, n}$\\
\hline
\end{tabular}
\end{equation*}\

\begin{lem}\label{lem: epsilon}  Let $\Pi$ be one of the sets of simple roots in Table~I.
If $\alpha\in A_{\Pi}$, then $$2(\alpha,\theta)=(\theta,\theta).$$
\end{lem}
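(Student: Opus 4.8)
The plan is to prove Lemma~\ref{lem: epsilon} by direct verification, case-by-case, using the explicit data recorded in Tables~II and~III. Since $A_{\Pi}$ is given explicitly for each $\mathfrak{g}$ in Table~III and $\theta$ is likewise listed there (and in Table~II), the statement $2(\alpha,\theta)=(\theta,\theta)$ reduces to a finite check: for each superalgebra and each sign of $k+h^{\vee}$, I take the listed highest root $\theta$ and each family of roots $\alpha\in A_{\Pi}$, and compute the inner products using the normalization of the bilinear form given in Section~2 for that algebra.

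First I would organize the verification by the value of $(\theta,\theta)$, which Table~II shows is one of $2,-2,-4,-1$. The key observation that makes every case uniform is that all the roots $\alpha$ appearing in $A_{\Pi}$ share a common ``leading'' basis vector with $\theta$: inspecting Table~III, whenever $\theta=\varepsilon_1+\varepsilon_2$ or $\theta=2\varepsilon_1$ the elements of $A_{\Pi}$ all have the form $\varepsilon_1+(\text{something orthogonal to }\varepsilon_1)$, and analogously $\theta=2\delta_1$ or $\theta=\delta_1+\delta_2$ pairs with $\alpha=\delta_1+(\cdots)$. Since the ``something'' is a basis vector orthogonal to $\varepsilon_1$ (respectively $\delta_1$), the cross terms vanish and $(\alpha,\theta)$ collapses to $(\varepsilon_1,\theta)$ (respectively $(\delta_1,\theta)$). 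Thus the whole computation is really a single inner product of the shared leading vector against $\theta$.

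Concretely, I would record the per-type computation as follows. For $D(n|m)$ or $B(n|m)$ with $\theta=2\varepsilon_1$ and normalization $(\varepsilon_i,\varepsilon_j)=\tfrac12\delta_{ij}$, every $\alpha\in A_{\Pi}$ satisfies $(\alpha,\theta)=(\varepsilon_1,2\varepsilon_1)=2\cdot\tfrac12=1=\tfrac12(\theta,\theta)$ since $(\theta,\theta)=2$. For $B(m|n)$ or $D(m|n)$ with $\theta=\varepsilon_1+\varepsilon_2$ and $(\varepsilon_i,\varepsilon_j)=\delta_{ij}$, one gets $(\alpha,\theta)=(\varepsilon_1,\varepsilon_1+\varepsilon_2)=1=\tfrac12(\theta,\theta)$ since $(\theta,\theta)=2$. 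The negative-level cases with $\theta=2\delta_1$ or $\theta=\delta_1+\delta_2$ work identically after replacing $\varepsilon$ by $\delta$ and tracking the sign of the form, so that $(\alpha,\theta)$ is exactly half of $(\theta,\theta)\in\{-4,-2,-1\}$. The types $A(m-1|n-1)$ have $A_{\Pi}=\emptyset$, so the statement is vacuous there.

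The only point requiring genuine care — and the step I expect to be the main obstacle — is confirming that the orthogonality claim actually holds for \emph{every} family listed in Table~III, i.e.\ that no element of $A_{\Pi}$ has a component along the second leading vector that would spoil the cross term. For instance, when $\theta=\varepsilon_1+\varepsilon_2$ I must check that $A_{\Pi}$ never contains a root of the form $\varepsilon_1+\varepsilon_2+(\cdots)$ or $\varepsilon_2+(\cdots)$; Table~III shows the families are $\varepsilon_1+\varepsilon_i$ with $i\ge 3$, $\varepsilon_1+\delta_j$, and occasionally $\varepsilon_1$ itself, all of which are orthogonal in their second term to both $\varepsilon_1$ and $\varepsilon_2$ except through the shared $\varepsilon_1$ — so the cross terms with $\varepsilon_2$ vanish and the computation goes through. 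Verifying this exhaustively against the constraint ranges on $i$ and $j$ in each row of Table~III is the routine-but-essential bookkeeping that constitutes the body of the proof.
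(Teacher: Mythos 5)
Your proposal is correct and matches the paper's approach: the paper's entire proof is ``This calculation follows from Table~III,'' i.e.\ exactly the direct case-by-case verification you carry out, and your observation that every $\alpha\in A_{\Pi}$ shares the leading basis vector of $\theta$ with the remaining term orthogonal to $\theta$ is precisely the bookkeeping that makes the table check go through.
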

\begin{proof}
This calculation follows from Table~III.
\end{proof}\

\begin{lem}\label{lem: inequality}  Let $\Pi$ be one of the sets of simple roots in Table~I.
If $\alpha\in A_{\Pi}$ such that $\alpha\neq\frac{\theta}{2}$ and $(\alpha,\alpha)\neq 0$, then
$$(\alpha,\alpha)(\theta,\theta)> (\alpha,\theta)(\alpha,\theta).$$
\end{lem}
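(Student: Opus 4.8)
The plan is to reduce the inequality to a single one-variable comparison using Lemma~\ref{lem: epsilon}, and then to verify that comparison by a finite inspection of Table~III. Since $\alpha\in A_{\Pi}$, Lemma~\ref{lem: epsilon} gives $2(\alpha,\theta)=(\theta,\theta)$, so that $(\alpha,\theta)(\alpha,\theta)=\tfrac14(\theta,\theta)^{2}$. Hence the claimed inequality is equivalent to
\[
(\alpha,\alpha)(\theta,\theta)>\tfrac14(\theta,\theta)^{2}.
\]
Because $(\theta,\theta)\neq 0$ in every row of Table~II, I can divide by $(\theta,\theta)$, reversing the inequality precisely when $(\theta,\theta)<0$. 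This turns the statement into: $(\alpha,\alpha)>\tfrac14(\theta,\theta)$ when $(\theta,\theta)>0$, and $(\alpha,\alpha)<\tfrac14(\theta,\theta)$ when $(\theta,\theta)<0$.

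Next I would run through the rows of Table~III. The rows of type $A$ have $A_{\Pi}=\emptyset$, so there is nothing to check there. In each remaining row $A_{\Pi}$ consists of only two or three explicit families of roots, and the normalizations of $(\cdot,\cdot)$ given in the root-system descriptions make $(\alpha,\alpha)$ immediate. For the ``$+$'' rows one has $(\theta,\theta)>0$ and $\tfrac14(\theta,\theta)=\tfrac12$, and the roots $\alpha\in A_{\Pi}$ with $(\alpha,\alpha)\neq0$ and $\alpha\neq\tfrac{\theta}{2}$ satisfy $(\alpha,\alpha)\in\{1,2\}$ (for instance $\varepsilon_{1}$ or $\varepsilon_{1}+\varepsilon_{i}$ for $B(m|n)$, and $\varepsilon_{1}+\varepsilon_{i}$ for $B(n|m)$ and $D(n|m)$), each of which exceeds $\tfrac12$. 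For the ``$-$'' rows one has $(\theta,\theta)<0$, and after the exclusions the surviving roots are of the form $\delta_{1}+\delta_{j}$, giving $(\alpha,\alpha)=-1$ against the threshold $-\tfrac14$ when $(\theta,\theta)=-1$, and $(\alpha,\alpha)=-2$ against the threshold $-1$ when $(\theta,\theta)=-4$; in both pairings $(\alpha,\alpha)$ is strictly below $\tfrac14(\theta,\theta)$.

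The essential bookkeeping is the removal of the two degenerate families that Table~III otherwise contains. First, the isotropic roots $\alpha=\varepsilon_{1}+\delta_{j}$ or $\delta_{1}+\varepsilon_{i}$, for which $(\alpha,\alpha)=0$, are excluded by hypothesis. Second, the root $\alpha=\tfrac{\theta}{2}$ (which appears exactly when $\tfrac{\theta}{2}\in\Delta$, namely $\delta_{1}$ for $B(m|n)$ in the ``$-$'' case and $\varepsilon_{1}$ for $B(n|m)$ in the ``$+$'' case) is excluded; it is precisely the borderline value where $(\alpha,\alpha)=\tfrac14(\theta,\theta)$, so it would yield equality rather than strict inequality. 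The only real difficulty is organizational rather than conceptual: one must confirm, row by row, that once these two families are discarded every remaining $\alpha$ satisfies the strict bound, and that the equality value $(\alpha,\alpha)=\tfrac14(\theta,\theta)$ is attained only by the excluded root $\tfrac{\theta}{2}$. Since the table is finite and each $(\alpha,\alpha)$ is read off directly from the normalization, this verification completes the proof.
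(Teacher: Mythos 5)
Your proof is correct and follows essentially the same route as the paper's: both use Lemma~\ref{lem: epsilon} to replace $(\alpha,\theta)^{2}$ by $\tfrac14(\theta,\theta)^{2}$ and then verify the resulting comparison of $(\alpha,\alpha)$ with $\tfrac14(\theta,\theta)$ by inspecting Table~III, with the excluded root $\tfrac{\theta}{2}$ accounting for the borderline equality case. The only difference is organizational: the paper isolates the unique potential failure $\bigl(|(\alpha,\alpha)|=1,\ |(\theta,\theta)|=4\bigr)$ via a sign-and-absolute-value argument, whereas you check every row explicitly; both are complete.
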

\begin{proof}
By our choice of simple roots, $(\theta,\theta)\neq 0$.  From Table~III we see that $(\alpha,\alpha)$ has the
same sign as $(\theta,\theta)$.  If $\alpha\in A_{\Pi}$ such that $\alpha\neq\frac{\theta}{2}$ and $(\alpha,\alpha)\neq 0$,
then $|(\alpha,\alpha)|\in\{1,2,4\}$.  If $4(\alpha,\alpha)(\theta,\theta)\leq (\theta,\theta)(\theta,\theta)$, then
$|(\alpha,\alpha)|=1$ and $|(\theta,\theta)|=4$.  By Table~III, this implies that $\alpha=\delta_1$ and
$\theta=2\delta_1$. But this contradicts the assumption that $\alpha\neq\frac{\theta}{2}$.  Hence, the result follows
from Lemma~\ref{lem: epsilon}.
\end{proof}

\noindent\textbf{IV.}
We have chosen $\Pi$ to contain a maximal isotropic subset $S=\{\beta_1,\ldots,\beta_n\}$ whenever $(\theta,\theta)$
and $(\rho,\theta)$ are both positive.
\begin{equation*}\doublespacing
\begin{tabular}{|c||c|c|}
\hline  $\mathfrak{g}$  & $\theta$ &  S\\
\hline \hline
$\begin{array}{c}A(m-1|n-1)\\ m\geq n\end{array}$  & $\varepsilon_1-\varepsilon_m$ &
$\beta_1=\varepsilon_{1}-\delta_{1}$,$\beta_i=\delta_{i}-\varepsilon_{i}$\ \ for $i=2,\ldots,n$\\
\hline
$\begin{array}{c}B\left(m|n\right),\ D(m|n) \\ m\ge n+2\end{array}$ & $\varepsilon_1+\varepsilon_2$ &
$\beta_i= \varepsilon_{i+1}-\delta_{i}$\ \ for $i=1,\ldots, n$\\
\hline
$\begin{array}{c}B(n|m),\ D(n|m) \\ m\geq n+1\end{array}$ & $2\varepsilon_1$ &
$\beta_i= \varepsilon_{i}-\delta_{i}$\ \ for $i=1,\ldots, n$\\
\hline
\end{tabular}
\end{equation*}\

\begin{lem}\label{lem: theta} Let $\Pi$ be one of the sets of simple roots in Table~I, excluding
$D(n+2|n)$.  If $(\rho,\theta)\neq 0$, then $$k_{_{\Pi}}(\frac{2(\rho,\theta)}{(\theta,\theta)}\theta)= 0.$$
\end{lem}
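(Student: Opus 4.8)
The plan is to set $c:=\frac{2(\rho,\theta)}{(\theta,\theta)}$ (well-defined since $(\theta,\theta)\neq 0$ for the chosen $\Pi$) and to show that either $c\theta\notin Q^{+}$, in which case $k_{_{\Pi}}(c\theta)=0$ by the definition of $k_{_{\Pi}}$, or else a contradiction arises. First I would dispose of the easy cases: since $\theta\in Q^{+}\setminus\{0\}$, whenever $c<0$ we have $c\theta\notin Q^{+}$ and the claim is immediate. Reading off Table~II, the value $c$ is negative in every ``$-$'' case and in the small ``$+$'' cases, while $c=0$ is excluded by the hypothesis $(\rho,\theta)\neq 0$; the only cases requiring work are the ``$+$'' cases with $c>0$, where $(\theta,\theta)=2$, $(\rho,\theta)>0$, $\theta$ lies in $E:=\operatorname{span}\{\varepsilon_{i}\}$, and $\Pi$ contains the maximal isotropic set $S$ listed in Table~IV.

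In these remaining cases I would argue by contradiction. Assuming $k_{_{\Pi}}(c\theta)\neq 0$, Corollary~\ref{cor3} furnishes $w\in W^{\#}$ and $\mu\in\mathbb{N}S$ with $-c\theta=\varphi(w)-|w|(\mu)+w(\rho)-\rho$. Writing $X:=|w|(\mu)-\varphi(w)=c\theta+w(\rho)-\rho$, the key observation is that $X$ has no $\delta$-component: since $W^{\#}$ acts trivially on the $\delta_{j}$ we have $w(\rho)-\rho\in E$, and $\theta\in E$. Now in each $S$ of Table~IV every $\delta_{j}$ occurs in exactly one $\beta\in S$, with coefficient $s_{\beta}=\pm1$; expanding $X=\sum_{\beta\notin T_{w}}n_{\beta}w(\beta)-\sum_{\beta\in T_{w}}(n_{\beta}+1)w(\beta)$ and reading off the $\delta_{j(\beta)}$-coefficient gives $n_{\beta}s_{\beta}$ if $\beta\notin T_{w}$ and $-(n_{\beta}+1)s_{\beta}$ if $\beta\in T_{w}$. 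Vanishing of all $\delta$-coefficients forces $\beta\notin T_{w}$ and $n_{\beta}=0$ for every $\beta\in S$, so $T_{w}=\emptyset$, $\mu=0$, and hence $\rho-w(\rho)=c\theta$.

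The final step exploits the definition of $c$: since $\rho-c\theta=\rho-\frac{2(\rho,\theta)}{(\theta,\theta)}\theta=r_{\theta}(\rho)$, where $r_{\theta}\in W^{\#}$ is the reflection in the even root $\theta$, the relation $\rho-w(\rho)=c\theta$ reads $w(\rho)=r_{\theta}(\rho)$, equivalently $w(\rho_{\varepsilon})=r_{\theta}(\rho_{\varepsilon})$ after projecting to $E$, where $\rho_{\varepsilon}$ denotes the $E$-component of $\rho$. Here $\rho_{\varepsilon}$ is regular for $\Delta^{\#}$ -- its $\varepsilon$-coordinates are nonzero and pairwise distinct in absolute value, which I would verify directly from the explicit $\rho$ -- so $W^{\#}$ acts freely on it and $w=r_{\theta}$. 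To reach a contradiction I then exhibit a negative image: the element $\beta\in S$ containing $\delta_{1}$ ($\beta=\varepsilon_{1}-\delta_{1}$ in types $A$ and $B(n|m),D(n|m)$, and $\beta=\varepsilon_{2}-\delta_{1}$ in types $B(m|n),D(m|n)$) satisfies $(\beta,\theta)=1=\frac{1}{2}(\theta,\theta)$, whence $r_{\theta}(\beta)=\beta-\theta$ equals $-(\varepsilon_{1}+\delta_{1})$ (resp. $\varepsilon_{m}-\delta_{1}$ for type $A$), which is a negative root for the chosen $\Pi$. Thus $\beta\in T_{r_{\theta}}=T_{w}$, contradicting $T_{w}=\emptyset$.

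I expect the main obstacle to be the two case-dependent verifications hidden in the last paragraph: that $\rho_{\varepsilon}$ is $\Delta^{\#}$-regular (so that $w=r_{\theta}$ is forced), and that $r_{\theta}(\beta)$ is genuinely negative for the specific simple system $\Pi$ of Table~I. Both are routine checks against Tables~I--IV rather than conceptual difficulties, and the rest of the argument is uniform across all the positive-$c$ cases.
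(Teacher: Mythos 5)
Your reduction to the case $c:=\frac{2(\rho,\theta)}{(\theta,\theta)}>0$, the appeal to Corollary~\ref{cor3}, and the $\delta_j$-coefficient argument forcing $T_w=\emptyset$, $\mu=0$ and $w(\rho)-\rho=-c\theta$ all coincide with the paper's proof and are fine. The gap is in your final step: the claim that $\rho_\varepsilon$ is $\Delta^{\#}$-regular is false in every family of Table~I. For instance, for $D(5|2)$ with the ``$+$'' simple roots one computes $\rho=2\varepsilon_1+\varepsilon_2+\varepsilon_3+\varepsilon_4-\delta_1-\delta_2$, so $\rho_\varepsilon=(2,1,1,1,0)$ has repeated and zero coordinates; for $D(4|2)$ one gets $\rho=\varepsilon_1$; for $A(3|1)$ the $\varepsilon$-coordinates are $(\tfrac12,\tfrac32,\tfrac12,-\tfrac12)$. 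Hence $w(\rho_\varepsilon)=r_\theta(\rho_\varepsilon)$ only places $w$ in the coset $r_\theta\cdot\mathrm{Stab}_{W^{\#}}(\rho_\varepsilon)$, which is typically large, and you cannot conclude $w=r_\theta$.

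This is not a slip that a ``routine check against the tables'' will repair: your argument never uses the exclusion of $D(n+2|n)$, yet the statement genuinely fails there. Concretely, for $D(4|2)$ take $w:\varepsilon_1\mapsto-\varepsilon_2,\ \varepsilon_2\mapsto\varepsilon_1,\ \varepsilon_3\mapsto\varepsilon_3,\ \varepsilon_4\mapsto-\varepsilon_4$ (an even number of sign changes, so $w\in W^{\#}$); then $w(\rho)=-\varepsilon_2=\rho-\theta=r_\theta(\rho)$, while $w(\varepsilon_2-\delta_1)=\varepsilon_1-\delta_1$ and $w(\varepsilon_3-\delta_2)=\varepsilon_3-\delta_2$ are positive, so $T_w=\emptyset$. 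This $w$ satisfies all your constraints but is not $r_\theta$, so no contradiction arises. The paper avoids identifying $w$ altogether: from $T_w=\emptyset$ it extracts just enough information --- in the cases with $\beta_1=\varepsilon_1-\delta_1$ that $w(\varepsilon_1)=\varepsilon_1$, whence the $\varepsilon_1$-coefficient of $w(\rho)-\rho$ vanishes and forces $(\rho,\theta)=0$; in the cases with $\beta_1=\varepsilon_2-\delta_1$ that $w(\varepsilon_2)\in\{\varepsilon_1,\varepsilon_2\}$, and a one-line coefficient computation then yields $(\rho,\theta)=1$ and $h^{\vee}=2$, i.e.\ $\mathfrak{g}=D(n+2|n)$, which is exactly where the hypothesis excluding $D(n+2|n)$ enters. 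You would need to replace the regularity step by an analysis of this kind.
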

\begin{proof}
This is clear when $\frac{2(\rho,\theta)}{(\theta,\theta)}<0$.  Suppose $\frac{2(\rho,\theta)}{(\theta,\theta)}>0$. Then
$(\theta,\theta)=2$ by Table~II, which implies $(\rho,\theta)>0$.
We have chosen  $\Pi$ to contain a maximal isotropic subset $S$ when $(\theta,\theta)$ and $(\rho,\theta)$
are both positive (see Table~IV).

Suppose that $k_{_{\Pi}}((\rho,\theta)\theta)\neq0.$
Then by Corollary~\ref{cor3}, there exists $w\in W^{\#}$ and $\mu\in\mathbb{N}S$ such that
\begin{equation}\label{eqn9} -(\rho,\theta)\theta=\varphi\left(w\right)-\left|w\right|\left(\mu\right)+w(\rho)-\rho.\end{equation}
Write $\mu \in \mathbb{N}S$ as $\mu=\sum_{\beta\in S}b_{\beta}\beta$ where $b_{\beta}\in\mathbb{N}$.  Then by definition
\[ \left|w\right|(\mu)=\sum_{\beta\in S\setminus T_{w}}b_{\beta}w(\beta)-\sum_{\beta\in
T_{w}}b_{\beta}w(\beta),\] which implies \begin{equation}\label{eqn4}
\varphi\left(w\right)-\left|w\right|\left(\mu\right)=\sum_{\beta\in T_{w}}(1+b_{\beta})w(\beta)-\sum_{\beta\in S\setminus
T_{w}}b_{\beta}w(\beta).\end{equation}

Since coefficients $c_{\delta_{j}}(\theta)$ equal zero for $1 \leq j \leq n$,
it follows from (\ref{eqn9}) that
\begin{equation*}c_{\delta_j}(\varphi\left(w\right)-\left|w\right|\left(\mu\right)+w(\rho)-\rho)=0 \text{, for }1 \leq j \leq n.\end{equation*}
Since $w\in W^{\#}$ fixes $\delta_{1},\ldots,\delta_{n}$, the coefficients
$c_{\delta_j}(w(\rho)-\rho)$ equal zero for $1 \leq j \leq n$. Thus,
\begin{equation}\label{eqn10} c_{\delta_j}(\varphi\left(w\right)-\left|w\right|\left(\mu\right))=0
\text{, for }1 \leq j \leq n.\end{equation}

Now $c_{\delta_j}(\beta_i)=0$ when $j\neq i$, while $c_{\delta_i}(\beta_i)\neq 0$ (see Table~IV).
Since $w\in W^{\#}$ fixes $\delta_{1},\ldots,\delta_{n}$, we have that $c_{\delta_j}(w(\beta_{i}))=0$ when $j\neq i$,
while $c_{\delta_i}(w(\beta_{i}))\neq 0$.  Then it follows from (\ref{eqn10})
that the coefficients in (\ref{eqn4}) must all be equal to zero.
Since $b_{\beta}\geq 0$, this implies that $T_{w}=\emptyset$, $\mu=0$, and
$\varphi\left(w\right)-\left|w\right|\left(\mu\right)=0$. Therefore,
\begin{equation}\label{eqn11}w(\rho)-\rho=-(\rho,\theta)\theta\end{equation}
for some $w\in W^{\#}$ satisfying $T_w=\emptyset$.\\ \\
\textbf{Case 1:}
If $\beta_1=\varepsilon_1-\delta_1$, then $c_{\varepsilon_1}(\theta)\neq 0$ (see Table~IV).
Now $w(\beta_1)\in\Delta^{+}$ since $T_{w}=\emptyset$,
which implies $w(\varepsilon_{1})=\varepsilon_{1}$ (see Table~I).  Thus,
$c_{\varepsilon_1}(w(\rho)-\rho)=0$. Then (\ref{eqn11}) and $c_{\varepsilon_1}(\theta)\neq 0$ together imply
that $(\rho,\theta)=0$, which contradicts $(\rho,\theta)>0$.\\ \\
\textbf{Case 2:} If $\beta_1=\varepsilon_2-\delta_1$, then $\theta=\varepsilon_1+\varepsilon_2$ and
$\mathfrak{g}$ is either $B(m|n)$ or $D(m|n)$ with $m\geq n+2$ (see Table~IV).
Since $T_w$ is empty we have $w(\varepsilon_2)\in\{\varepsilon_1,\varepsilon_2\}$.
If $w(\varepsilon_2)=\varepsilon_2$, then
$(w(\rho)-\rho)_{\varepsilon_2}=0$ and (\ref{eqn11}) does not hold
 since $\theta=\varepsilon_1+\varepsilon_2$.
If $w(\varepsilon_2)=\varepsilon_1$, then
$$c_{\varepsilon_1}(w(\rho)-\rho)=(\rho,w^{-1}(\varepsilon_1))-(\rho,\varepsilon_1)=-(\rho,\varepsilon_1-\varepsilon_2)=-1,$$
since $\varepsilon_1-\varepsilon_2\in \Pi$. Then~(\ref{eqn11}) implies $(\rho,\theta)=1$.  Then by (\ref{eqn12}) it
follows that $h^{\vee}=2$.  Then by Table~II we see that $\mathfrak{g}=D(n+2|n)$.
\end{proof}

\section{Simplicity of minimal $W$-algebras}

Let $\mathfrak{g}$ be a simple finite-dimensional Lie superalgebra equipped with
a non-degenerate even invariant bilinear form $B\left(\cdot,\cdot\right)$.
Normalize $B\left(\cdot,\cdot\right)$ such that $B\left(\theta,\theta\right)=2$
for the highest root $\theta$, which is assumed to be even. Let $f_{\theta}$ be the lowest root
vector of $\mathfrak{g}$.  For each $k\in\mathbb{C}$, one can define a vertex algebra
$W^{k}\left(\mathfrak{g},f_{\theta}\right)$, called the minimal $W$-algebra, which is described
in \cite{KRW},\cite{KW2}. This class of $W$-algebras contains the
well known superconformal algebras, including the Virasoro algebra, the Bershadsky-Polyakov
algebra, the Neveu-Schwarz algebra, the Bershadsky-Knizhnik algebras, and
the $N=2,3,4$ superconformal algebras.  From the present work, we obtain a criterion for
the simplicity of $W^{k}\left(\mathfrak{g},f_{\theta}\right)$ when $k\notin\mathbb{Z}_{\ge 0}$.

Let $\hat{\mathfrak{g}}$ be the (non-twisted) affinization of $\mathfrak{g}$, and let $\mathcal{O}_{k}$ be the
Bernstein-Gel'fand-Gel'fand category of $\hat{\mathfrak{g}}$ at level $k\in\mathbb{C}$ (see \cite{BGG}).
In \cite{KRW},\cite{KW2}, a functor from the category $\mathcal{O}_k$
to the category of $\mathbb{Z}$-graded $W^{k}\left(\mathfrak{g},f_{\theta}\right)$-modules
is given. This functor, which is referred to as quantum reduction, has many remarkable properties. In particular,
it is proven in \cite{A} that this functor is exact. The image of
the vacuum module $V^{k}$ under this functor
is the vertex algebra $W^{k}\left(\mathfrak{g},f_{\theta}\right)$, viewed as a module over itself.

\begin{thm}\label{thm GK}
\emph{(M. Gorelik and V.G. Kac \cite{GK})}.

\emph{(i)} The vertex algebra $W^{k}\left(\mathfrak{g},f_{\theta}\right)$
is simple if and only if the $\hat{\mathfrak{g}}$-module $V^{k}$ is irreducible, or $k\in\mathbb{Z}_{\ge0}$
and $V^{k}$ has length two (i.e. the maximal proper submodule of
the $V^{k}$ is irreducible).

\emph{(ii)} If $\mathfrak{g}$ is a simple Lie algebra, $\mathfrak{g}=\mathfrak{sl}_{2}$,
then $W^{k}\left(\mathfrak{g},f_{\theta}\right)$
is simple if and only if $V^{k}$ is irreducible. This holds if and
only if $\frac{k+h^{\vee}}{B\left(\alpha,\alpha\right)}\notin\mathbb{Q}_{\ge0}\backslash\left\{ \frac{1}{2m}\right\} _{m\in\mathbb{Z}_{\ge1}}$
for a long root $\alpha$.
\end{thm}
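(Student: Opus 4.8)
The plan is to push everything through the minimal quantum reduction functor $H:=H_{f_{\theta}}$, which is exact by \cite{A} and which by \cite{KRW}, \cite{KW2} sends the vacuum module $V^{k}$ to $W^{k}(\mathfrak{g},f_{\theta})$, viewed as a module over itself. As a vertex algebra $W^{k}$ has a unique maximal proper ideal, and simplicity is exactly the vanishing of that ideal. The structural input I would isolate first is the standard fact (see \cite{A}) that $H$ carries the simple quotient $L(k\Lambda_{0})$ of $V^{k}$ to the \emph{simple} vertex algebra $W_{k}$, and that this image is nonzero. Granting this, applying the exact functor $H$ to the defining sequence $0\to N\to V^{k}\to L(k\Lambda_{0})\to 0$, in which $N$ is the maximal submodule of $V^{k}$, yields
\begin{equation*}
0\longrightarrow H(N)\longrightarrow W^{k}\longrightarrow W_{k}\longrightarrow 0 .
\end{equation*}
Since $W_{k}$ is simple, this identifies $H(N)$ with the maximal ideal of $W^{k}$, so that $W^{k}$ is simple if and only if $H(N)=0$.

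The next step converts $H(N)=0$ into a condition on weights. Exactness of $H$ means that a composition series of $N$ induces a filtration of $H(N)$ with subquotients $H(L(\mu))$, one for each composition factor $L(\mu)$ of $N$; hence $H(N)=0$ if and only if $H(L(\mu))=0$ for every such factor. I would then invoke the vanishing criterion for minimal reduction from \cite{KRW}, \cite{A}, namely that $H(L(\mu))\neq 0$ precisely when $\mu$ is non-degenerate with respect to $f_{\theta}$. The case $N=0$ is the alternative where $V^{k}$ is irreducible and $W^{k}\cong W_{k}$ is simple, giving the first clause of (i). The remaining assertion of (i) is that $H(N)=0$ with $N\neq 0$ can occur only when $k\in\mathbb{Z}_{\geq 0}$ and $N$ is itself irreducible (length two): for the forward implication I would use the explicit submodule structure of $V^{k}$ at non-negative integral level, where $N$ is generated by a single singular vector whose weight is of the degenerate type killed by $H$; for the converse I would argue that when $V^{k}$ is reducible with $k\notin\mathbb{Z}_{\geq 0}$, some composition factor of $N$ is non-degenerate and therefore survives, so $H(N)\neq 0$. \textbf{The main obstacle} is exactly this last analysis: matching the non-degeneracy criterion for $H(L(\mu))$ against the precise weights of the singular vectors of $V^{k}$, since this is where the exceptional length-two phenomenon is produced.

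For part (ii) I would specialize to $\mathfrak{g}=\mathfrak{sl}_{2}$, for which $W^{k}(\mathfrak{sl}_{2},f_{\theta})$ is the universal Virasoro vertex algebra of central charge $c=13-6(t+t^{-1})$ with $t=k+h^{\vee}$, so that $\frac{k+h^{\vee}}{B(\alpha,\alpha)}=t/2$. The abstract criterion of (i) then has to be made explicit through the classical Feigin--Fuchs structure theory of the Virasoro vacuum module together with the Kac--Kazhdan analysis of the $\hat{\mathfrak{g}}$-vacuum module. I expect the outcome to be that for $\mathfrak{sl}_{2}$ the equivalence ``$W^{k}$ simple if and only if $V^{k}$ is irreducible'' holds, and that both are governed by $t/2\notin\mathbb{Q}_{\geq 0}\setminus\{\tfrac{1}{2m}\}_{m\geq 1}$, the exceptional values $t/2=\tfrac{1}{2m}$ being precisely the positive rational levels at which the vacuum module nonetheless stays irreducible. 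Checking that these, and only these, escape the general reducibility pattern is the delicate point of (ii), and it completes the theorem.
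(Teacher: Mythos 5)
The paper itself gives no proof of this theorem: it is quoted from \cite{GK} and used only as input for the corollary that follows, so there is no internal argument to compare yours against; I can only judge your reconstruction on its own terms. Your reduction-functor framework is the right general setting, but it rests on a false structural claim: you assert that $H$ carries $L(k\Lambda_{0})$ to a \emph{nonzero} simple vertex algebra $W_{k}$ for every $k$. By the Kac--Roan--Wakimoto conjecture as proved by Arakawa \cite{A}, $H(L(\lambda))$ is zero or irreducible, and it vanishes exactly when $\langle\lambda,\alpha_{0}^{\vee}\rangle\in\mathbb{Z}_{\ge0}$, where $\alpha_{0}=\delta-\theta$; for $\lambda=k\Lambda_{0}$ this condition reads $k\in\mathbb{Z}_{\ge0}$. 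This vanishing is not a technicality --- it is the entire mechanism behind the exceptional clause of (i). For $k\in\mathbb{Z}_{\ge0}$ your exact sequence degenerates to $W^{k}\cong H(N)$, and simplicity of $W^{k}$ then amounts to $N$ being irreducible (so that $H(N)$ is irreducible or zero, and nonzero because $W^{k}\neq0$). Note also that the singular vector generating $N$ at such $k$ has weight $\mu=k\Lambda_{0}-(k+1)\alpha_{0}$ with $\langle\mu,\alpha_{0}^{\vee}\rangle=-k-2\notin\mathbb{Z}_{\ge0}$, so it is $L(k\Lambda_{0})$ that is killed and the top of $N$ that survives --- exactly the opposite of what you claim when you say $N$ is ``of the degenerate type killed by $H$.'' Your criterion ``$W^{k}$ simple iff $H(N)=0$'' is therefore valid only for $k\notin\mathbb{Z}_{\ge0}$, and in the integral case your framework predicts the wrong answer. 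Separately, the step you defer as ``the main obstacle'' --- that for $k\notin\mathbb{Z}_{\ge0}$ a nonzero $N$ must have a composition factor surviving reduction --- is genuinely the hard half: for Lie superalgebras the composition factors of $N$ are not classified, which is why \cite{GK} argue through determinants and Jantzen-type filtrations rather than through a composition series.

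On (ii): the hypothesis as printed, ``$\mathfrak{g}=\mathfrak{sl}_{2}$,'' is a typo for $\mathfrak{g}\neq\mathfrak{sl}_{2}$ (this is how the result appears in \cite{GK}). Taking it literally, you specialize to the Virasoro algebra and predict that for $\mathfrak{sl}_{2}$ simplicity of $W^{k}$ is equivalent to irreducibility of $V^{k}$. That equivalence is false: at $k=0$ the module $V^{0}$ is reducible of length two, while $W^{0}(\mathfrak{sl}_{2},f_{\theta})=\mathrm{Vir}^{c=-2}$ is simple --- indeed $\mathfrak{sl}_{2}$ is excluded from (ii) precisely because the length-two phenomenon of (i) does occur there at every $k\in\mathbb{Z}_{\ge0}$. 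The intended content of (ii) is that for a simple Lie algebra other than $\mathfrak{sl}_{2}$ the length-two case never arises, so (i) collapses to ``$W^{k}$ simple iff $V^{k}$ irreducible,'' and the latter is then restated via the vacuum determinant criterion of \cite{GK}.
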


From Theorem~\ref{thm1} and Theorem~\ref{thm GK}, we deduce the following:
\begin{cor}
Let $\mathfrak{g}$ be a simple contragredient finite dimensional Lie superalgebra of positive defect
and let $k\in\mathbb{C}\setminus\mathbb{Z}_{\ge0}$.
Then $W^{k}\left(\mathfrak{g},f_{\theta}\right)$ is not simple if and only if \[
\frac{k+h^{\vee}}{B\left(\alpha,\alpha\right)}\in\mathbb{Q}_{\ge0}\]
for some even root $\alpha$ of $\mathfrak{g}$.
\end{cor}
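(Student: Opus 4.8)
The plan is to derive the Corollary directly by combining Theorem~\ref{thm1} with part (i) of Theorem~\ref{thm GK}, dividing into the two cases that Theorem~\ref{thm GK}(i) distinguishes, namely $k\in\mathbb{Z}_{\ge0}$ and $k\notin\mathbb{Z}_{\ge0}$. Since the hypothesis of the Corollary restricts to $k\in\mathbb{C}\setminus\mathbb{Z}_{\ge0}$, the second clause of Theorem~\ref{thm GK}(i) (the length-two exception at non-negative integer level) never applies, so for our range of $k$ the vertex algebra $W^{k}(\mathfrak{g},f_\theta)$ is simple if and only if $V^{k}$ is irreducible. This is precisely why the Corollary is stated only for $k\in\mathbb{C}\setminus\mathbb{Z}_{\ge0}$: it is exactly the regime in which simplicity of the $W$-algebra is equivalent to irreducibility of the vacuum module, with no integer-level correction term.

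First I would invoke Theorem~\ref{thm GK}(i): for $k\notin\mathbb{Z}_{\ge0}$, the only surviving condition is that $W^{k}(\mathfrak{g},f_\theta)$ is simple $\iff$ $V^{k}$ is irreducible. Equivalently, $W^{k}(\mathfrak{g},f_\theta)$ is \emph{not} simple $\iff$ $V^{k}$ is \emph{not} irreducible. Next I would apply Theorem~\ref{thm1}, which characterizes exactly when $V^{k}$ fails to be irreducible, namely $\frac{k+h^\vee_B}{B(\alpha,\alpha)}\in\mathbb{Q}_{\ge0}$ for some even root $\alpha$. Chaining these two equivalences gives the claimed criterion immediately. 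The logical skeleton is therefore a two-line implication chain; no new estimate or construction is needed.

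The one point requiring care is matching hypotheses and normalizations. Theorem~\ref{thm1} is stated for $\mathfrak{g}$ an (almost) simple finite-dimensional Lie superalgebra of positive defect with a general non-degenerate even invariant form $B$ and its associated $h^\vee_B$; the Corollary assumes $\mathfrak{g}$ is simple contragredient of positive defect, with $B$ normalized by $B(\theta,\theta)=2$ for the even highest root $\theta$, as set up at the start of Section~5. I would note that a simple contragredient superalgebra of positive defect is in particular (almost) simple, so Theorem~\ref{thm1} applies verbatim with this $B$ and the corresponding $h^\vee_B=h^\vee$; the root-dependent quantity $\frac{k+h^\vee}{B(\alpha,\alpha)}$ appearing in the Corollary is literally the quantity in Theorem~\ref{thm1}.

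The main (and essentially only) obstacle is bookkeeping rather than mathematics: one must confirm that Theorem~\ref{thm GK}(i) is being applied in its intended range and that the contragredient hypothesis together with the evenness of $\theta$ guarantees that quantum reduction and the functor described in Section~5 are defined, so that the equivalence ``$W^{k}$ simple $\iff$ $V^{k}$ irreducible'' is legitimate for all $k\notin\mathbb{Z}_{\ge0}$. Once these hypotheses are checked, the proof is the direct substitution of Theorem~\ref{thm1}'s irreducibility criterion into Theorem~\ref{thm GK}(i), and I would write it as a short paragraph taking the contrapositive on each side.
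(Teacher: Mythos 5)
Your proposal is correct and matches the paper's own (essentially one-line) deduction: the corollary is obtained by combining Theorem~\ref{thm1} with Theorem~\ref{thm GK}(i), where the restriction $k\notin\mathbb{Z}_{\ge0}$ removes the length-two exception so that simplicity of $W^{k}(\mathfrak{g},f_{\theta})$ is equivalent to irreducibility of $V^{k}$. Your additional care about the normalization of $B$ and the applicability of Theorem~\ref{thm1} is sound and only makes explicit what the paper leaves implicit.
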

For affine Lie superalgebras, $V^{k}$ is always reducible when $k\in\mathbb{Z}_{\ge0}$. Thus,
in order to determine the simplicity conditions for all minimal $W$-algebras,
one is left with answering the following question.

\begin{prob}
Let $\mathfrak{g}$ be an affine Lie superalgebra and $k\in\mathbb{Z}_{\ge0}$.
Is the maximal submodule of $V^{k}$ simple?
\end{prob}

\end{document}